\NeedsTeXFormat{LaTeX2e}
\documentclass[12pt]{amsart}
\usepackage{a4wide}
\usepackage{amssymb}
\usepackage{amsthm}
\usepackage{amsmath}
\usepackage{amscd}
\usepackage{verbatim}
\usepackage[all]{xy}
\usepackage{longtable, lscape}
\usepackage{todonotes}

%\addtolength{\topmargin}{-3mm}
%\addtolength{\textheight}{6mm}
%\textheight8.5in \textwidth6.4in
\numberwithin{equation}{section}

\theoremstyle{plain}
\newtheorem{theorem}{Theorem}[section]

\newtheorem{lemma}[theorem]{Lemma}
\newtheorem{proposition}[theorem]{Proposition}

\theoremstyle{definition}

\theoremstyle{remark}
\newtheorem{remark}[theorem]{Remark}

\newcommand{\R}{\mathbb{R}}
\newcommand{\Q}{\mathbb{Q}}
\newcommand{\Z}{\mathbb{Z}}

\newcommand{\C}{\mathbb{C}}

\renewcommand{\H}{\mathbb{H}}

% Jan's Macros

\newcommand{\leg}[2]{\left( \frac{#1}{#2} \right)}
\newcommand{\kzxz}[4]{\left(\begin{smallmatrix} #1 & #2 \\ #3 & #4\end{smallmatrix}\right) }
\newcommand{\kabcd}{\kzxz{a}{b}{c}{d}}

\newcommand{\calD}{\mathcal{D}}

\newcommand{\calS}{\mathcal{S}}

\newcommand{\eps}{\varepsilon}
\newcommand{\bs}{\backslash}

\newcommand{\Sl}{\operatorname{SL}}

\newcommand{\SL}{\operatorname{SL}}

 % Clifford group
\newcommand{\Mp}{\operatorname{Mp}}
\newcommand{\Orth}{\operatorname{O}}

\newcommand{\Hom}{\operatorname{Hom}}

\newcommand{\pr}{\operatorname{pr}}

\newcommand{\SO}{\operatorname{SO}}

\newcommand{\Div}{\operatorname{Div}}

\newcommand{\ord}{\operatorname{ord}}

\begin{document}

\title[Borcherds products with prescribed divisor]{Borcherds products with prescribed divisor}

\author[Jan H.~Bruinier]{Jan
Hendrik Bruinier}
\address{Fachbereich Mathematik,
Technische Universit\"at Darmstadt, Schlossgartenstrasse 7, D--64289
Darmstadt, Germany}
\email{bruinier@mathematik.tu-darmstadt.de}
%\subjclass[2010]{11F55, 14G35, 14C25, 11F27} % July 21, 2016

\thanks{The author is supported by DFG grant BR-2163/4-2.}

\date{\today}

\begin{abstract}
Given an infinite set of special divisors satisfying a mild regularity condition, we prove the existence of a Borcherds product of non-zero weight whose divisor is supported on these special divisors. 
We also show that every meromorphic Borcherds product is the quotient of two holomorphic ones. The proofs of both results rely on the properties of vector valued Eisenstein series for the Weil representation.
\end{abstract}

\maketitle

%%%%%%%%%%%%%%%%%%%%%%%%%%%%%%%%%%%%%%%%%%%%%%%%%%%%%%%%%%%%%%%%%

\section{Introduction and statement of results}
\label{sect:intro}

Let $(L,Q)$ be an even lattice of signature $(n,2)$ with dual $L'$.
%We denote by $N$ the level of $L$, 
%
%We write $L'$ for the dual lattice and $N$ for the level of $L$.
% and write $L'$ for its dual. 
%
We write $\calD^+$ for the  hermitian symmetric space associated with the connected component of 
the real points of the orthogonal group $\Orth(L)$ of $L$. 
Let $\Gamma\subset \Orth(L)$ be a congruence subgroup which preserves $\calD^+$ and which acts trivially on the discriminant group  $L'/L$. By the theory of  Baily-Borel the quotient 
\[
X_\Gamma = \Gamma\bs \calD^+ 
\]
has a structure as a quasi-projective algebraic variety over $\C$ of dimension $n$. For every $\mu\in L'/L$ and every positive $m\in \Z+Q(\mu)$ there exists a special divisor $Z(m,\mu)$ on $X_\Gamma$. In the projective model of $\calD^+$ it is given by the orthogonal complements of vectors $\lambda\in L+\mu$ with $Q(\lambda)=m$, see \cite{Bo2}, \cite{Ku:Duke}, \cite{Br1}.

We briefly write $L^-$ for the lattice $(L,-Q)$ of signature $(2,n)$.
Recall that there is a Weil representation $\rho_L$ of the metaplectic group $\Mp_2(\Z)$ on the group ring  $\C[L'/L]$, see \cite{Bo1}, \cite{Br1}. By means of the standard $\C$-bilinear pairing on $\C[L'/L]$, the dual representation of $\rho_L$ can be identified with $\rho_{L^-}$.
In his celebrated paper \cite{Bo1},
R.~Borcherds constructed a map from weakly holomorphic modular forms of weight $1-n/2$ for $\Mp_2(\Z)$ with representation $\rho_{L^-}$ to meromorphic modular forms on $X_\Gamma$ whose divisors are supported on special divisors and which have particular infinite product expansions, see \cite[Theorem 13.3]{Bo1} and \cite[Theorem 3.22]{Br1}. Since these Borcherds products give rise to explicit relations among special divisors in the Picard group of $X_\Gamma$, they are of great importance for algebraic and arithmetic applications, see e.g.~\cite{Bo2}, \cite{Ku:Integrals}, \cite{BHY}. 
%\note{Add examples.}
In this note we prove two useful results about Borcherds products.

We call a set $\calS$ of pairs $(m,\mu)\in \Q_{>0}\times L'/L$ {\em admissible}, if:
\begin{enumerate}
\item For all $(m,\mu)\in \calS$ there exists a $\lambda\in \mu+L$ with $Q(\lambda)=m$.
\item There exists a positive integer $A$ such that $\ord_p(m)\leq A$ for all $(m,\mu)\in \calS$ and for all primes $p$ dividing $2|L'/L|$.
\end{enumerate}
The first condition is equivalent to requiring that $Z(m,\mu)$ be a non-trivial divisor on $X_\Gamma$.

\begin{theorem}
\label{thm:density}
Assume that $n\geq 2$.
%, or $n=2$ and $r(L)\leq 1$.
Let $\calS$ be an infinite admissible set of pairs 
$(m,\mu)\in \Q_{>0}\times L'/L$ 
%for which $Z(m,\mu)$ is non-trivial
%and $\ord_p(m)\leq A$ for all primes dividing $2N$.
Then there exists a Borcherds product $\Psi$ of non-zero weight whose divisor
is supported on divisors $Z(m,\mu)$
 with $(m,\mu) \in \calS$.
\end{theorem}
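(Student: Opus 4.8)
The plan is to realize $\Psi$ as the Borcherds lift of a suitable input form. By \cite[Theorem 13.3]{Bo1} (see also \cite[Theorem 3.22]{Br1}) it suffices to produce a weakly holomorphic modular form $f=\sum_{\mu,m}c(m,\mu)q^m\frake_\mu$ of weight $1-n/2$ and type $\rho_{L^-}$ whose principal part has integral coefficients supported on $\calS$ --- that is, $c(-m,\mu)=0$ whenever $m>0$ and $(m,\mu)\notin\calS$ --- and with $c(0,0)\neq 0$. Indeed, the divisor of $\Psi(f)$ is then supported on the $Z(m,\mu)$ with $(m,\mu)\in\calS$, and its weight equals $c(0,0)/2\neq 0$. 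Everything thus reduces to understanding which principal parts occur and how the constant term $c(0,0)$ is pinned down by them.

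For the first point I would invoke Borcherds' obstruction theorem: a finitely supported principal part $\sum_{m>0,\mu}c(-m,\mu)q^{-m}\frake_\mu$ (with rational coefficients) occurs for some weakly holomorphic form of weight $1-n/2$ and type $\rho_{L^-}$ if and only if $\sum_{m>0,\mu}c(-m,\mu)b(m,\mu)=0$ for every cusp form $g=\sum b(m,\mu)q^m\frake_\mu$ in the finite-dimensional space $S_{1+n/2}(\rho_L)$. The constant term is not free: pairing $f$ against any $g\in M_{1+n/2}(\rho_L)$ via the bilinear pairing on $\C[L'/L]$ yields a weakly holomorphic scalar form of weight $2$ on $\SL_2(\Z)$, whose constant term vanishes by the residue theorem. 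Taking $g=E_0$, the Eisenstein series normalized to have constant term $\frake_0$, this identity reads
\[
c(0,0)=-\sum_{m>0,\mu}c(-m,\mu)\,c_{E_0}(m,\mu).
\]
So the weight of $\Psi(f)$ is, up to the factor $-\tfrac12$, the pairing of the chosen principal part with the nonconstant coefficients of $E_0$.

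It remains to choose rational numbers $c(-m,\mu)$, supported on $\calS$, orthogonal to all of $S_{1+n/2}(\rho_L)$, and pairing nontrivially with $E_0$; clearing denominators then gives integral coefficients. Writing $r(h)=(c_h(m,\mu))_{(m,\mu)\in\calS}$ for the restriction of the coefficients of $h\in M_{1+n/2}(\rho_L)$, such a principal part exists precisely when $r(E_0)$ does \emph{not} lie in the finite-dimensional span $r\bigl(S_{1+n/2}(\rho_L)\bigr)$: in that case one can separate $r(E_0)$ from this span by a linear functional supported on finitely many elements of $\calS$, which is exactly the sought principal part. Since $E_0-g$ has constant term $\frake_0\neq 0$ for every cusp form $g$, it suffices to prove the following.

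\medskip
\noindent\emph{Key claim.} If $h\in M_{1+n/2}(\rho_L)$ satisfies $c_h(m,\mu)=0$ for all $(m,\mu)\in\calS$, then $h$ is a cusp form.

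\medskip
\noindent This claim is where both admissibility conditions and the properties of the vector valued Eisenstein series enter, and I expect it to be the main obstacle. Write $h=E+g$ with $E$ in the span of the Eisenstein series and $g$ cuspidal, and suppose $E\neq 0$. Using the explicit formula for the coefficients of Eisenstein series for $\rho_L$ (a product of local densities times a power $m^{n/2}$), condition~(1) forces local solvability at every place, so each density is positive, while condition~(2), the bound $\ord_p(m)\leq A$ at the primes $p\mid 2|L'/L|$, keeps the finitely many ``bad'' local factors bounded away from $0$; together these give a uniform lower bound $|c_E(m,\mu)|\gg m^{n/2-\eps}$ along $\calS$. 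Against this, the cusp form coefficients obey the Hecke bound $c_g(m,\mu)\ll m^{(n+2)/4}$, which is dominated by $m^{n/2}$ as soon as $n\geq 3$; in the boundary case $n=2$ (integral weight $2$) one replaces it by the Ramanujan--Deligne bound $c_g(m,\mu)\ll m^{1/2+\eps}$. Because $L'/L$ is finite and $\calS$ infinite, some fixed $\mu$ occurs in $\calS$ with $m\to\infty$, and for such $m$ the Eisenstein term strictly dominates, forcing $c_h(m,\mu)\neq 0$ --- a contradiction. Hence $E=0$, which proves the claim and the theorem. The delicate points are precisely the uniform lower bound for the Eisenstein coefficients, for which the regularity condition~(2) is tailor-made, and the use of a nontrivial bound for cusp form coefficients in the critical case $n=2$.
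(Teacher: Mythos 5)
Your overall route is, up to dualization, the paper's own proof: the paper also reduces the theorem to producing an input form with prescribed principal part via Borcherds' obstruction theory (its Proposition \ref{prop:crit}), and also rules out degeneration by playing a lower bound for the Eisenstein coefficients along $\calS$ (this is exactly where the two admissibility conditions enter, as in your sketch; see Proposition \ref{prop:eislb}) against a Hecke/Weil-type bound for cusp form coefficients. Your ``key claim'' is the dual formulation of the paper's assertion that some functional supported on $\calS$ annihilates $S_{1+n/2}(\rho_L)$ but not $E_{\kappa,L}$. (As a side remark, your key claim is stated for an arbitrary Eisenstein part $E$, while your sketch only justifies the lower bound for $E_{\kappa,L}$ itself; this is harmless, since only the case $E=E_0$ is needed.) However, two steps have genuine gaps.

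First, the case $n=2$ with Witt rank $r(L)=2$ is not covered. Your argument presupposes a \emph{holomorphic} Eisenstein series $E_0\in M_{1+n/2}(\rho_L)$ with constant term $\chi_0$, together with the residue identity $c(0,0)=-\sum_{m,\mu} c(-m,\mu)\,c_{E_0}(m,\mu)$. When $n=2$ and $L$ has Witt rank $2$ (e.g.\ when $L$ splits two hyperbolic planes), the weight-$2$ Eisenstein series exists only via the Hecke trick and carries a non-holomorphic contribution (a multiple of $\Im(\tau)^{-1}$) in its constant term; no holomorphic substitute exists, and in fact the constant term of $f$ is then \emph{not} determined by its principal part, because $M_0(\rho_{L^-})\neq 0$. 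Your discussion of ``the boundary case $n=2$'' only concerns replacing the Hecke bound by Deligne's bound and does not address this. The paper needs a separate argument here: it first produces $f$ satisfying condition (i) from the cusp-form obstruction \eqref{eq:cuspcond} alone, and then repairs $c(0,0)\neq 0$ by adding a rational $\rho_{L^-}$-invariant vector $f_0\in M_0(\rho_{L^-})$ with non-zero $\chi_0$-component, whose existence is deduced from the fact that Witt rank $2$ forces an even unimodular overlattice isomorphic to $I\!I_{2,2}$. Without some such device your proof covers only $n>2$ and $n=2$ with $r(L)\leq 1$.

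Second, integrality. Clearing denominators makes only the finitely many principal part coefficients integral, but the application of \cite[Theorem 13.3]{Bo1} requires the input form to have \emph{all} Fourier coefficients integral: the coefficients of positive index occur as exponents in the Borcherds product. (This is precisely the gap in \cite{Bo2} that required Borcherds' published correction.) The paper closes it with Theorem \ref{thm:intbasis} (based on McGraw's theorem): $M^!_{1-n/2}(\rho_{L^-})$ has a basis of forms with integral coefficients, so a form with all coefficients rational can be rescaled to one with all coefficients integral; moreover, this rational structure is also what guarantees in the first place that the form determined by your rational principal part has all of its coefficients rational. Your proposal needs to incorporate this step.
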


\begin{remark} 
%1) The hypothesis on $n$ is equivalent to the assumption that the Baily-Borel boundary of $X_\Gamma$ have codimension greater than $1$.
%
It is much easier to see that there is also a (non-constant) Borcherds product of 
weight $0$ whose divisor
is supported on divisors $Z(m,\mu)$
 with $(m,\mu) \in \calS$.
\end{remark}

This result can be used to construct sections of a {\em non-trivial} power of the tautological bundle over $X_\Gamma$ whose divisor 
is supported on divisors $Z(m,\mu)$
 with $(m,\mu) \in \calS$.
%is controlled by $\calS$. 
This is employed in the recent proof of the averaged 
Colmez conjecture by Andreatta, Goren, Howard, and Madapusi Pera \cite[Theorem 9.5.5]{AGHM}.

\begin{theorem}
\label{prop:quot}
Assume that $n\geq 1$.
Every Borcherds product for $\Gamma$ is the quotient of two Borcherds products for $\Gamma$ which are holomorphic on $X_\Gamma$.
\end{theorem}

This theorem is useful to reduce statements about Fourier expansions of Borcherds products to the holomorphic case. A slight variant (see Theorem \ref{prop:quot2}), 
together with \cite[Theorem 6.3]{HM}, can be employed to give a different proof of the converse theorem for Borcherds products \cite[Theorem 5.12]{Br1} 
for lattices that split two hyperbolic planes over $\Z$. 
Similar results as Theorems \ref{thm:density} and \ref{prop:quot} were obtained
 in \cite[Section 4]{BBK} for the special case of Hilbert modular surfaces for the full Hilbert modular group.

I thank S.~Ehlen and B.~Howard for useful conversations on the content of this note. Moreover, I thank the anonymous referee for helpful comments.

\section{Preliminaries}
\label{sect:2}

We begin by fixing some general notation.
If $D\in \Z\setminus\{0\}$ is a discriminant, we write $\chi_D$ for the Dirichlet character $\chi_D(a)= \leg{D}{a}$.
If $a$ is a positive integer and $\chi$ is a Dirichlet character,
we denote by $\sigma_s(a,\chi)$ the divisor sum
\[
\sigma_s(a,\chi)= \sum_{d\mid a} \chi(d) d^s.
\]
If $\chi=\chi_1$ is the trivial character modulo $1$, we briefly write $\sigma_s(a)=\sigma_s(a,\chi_1)$. As usual the Moebius function is denoted by $a\mapsto \mu(a)$.

In this section we temporarily consider an even lattice $(L,Q)$ of arbitrary signature $(b^+,b^-)$. We write $N$ for the level of $L$ and $\det(L)$ for the Gram determinant of $L$. Recall that $|\det(L)|=|L'/L|$ and that $N$ and $\det(L)$ have the same prime divisors. Moreover, we denote by $r(L)$ the Witt rank of $L$, i.e., the rank of a maximal totally isotropic sublattice.

As in \cite{Bo1} we denote by $\Mp_2(\Z)$ the metaplectic extension of $\SL_2(\Z)$, realized by the two possible choices of a holomorphic square root $\sigma(\tau)$ of the automorphy factor $j(g,\tau)=c\tau +d$ of $g=\kabcd\in \Sl_2(\Z)$ for $\tau$ in the upper complex half plane $\H$.
If $k\in \frac{1}{2}\Z$,   
we write $M^!_k(\rho_L)$ for the space of $\C[L'/L]$-valued weakly holomorphic modular forms of weight $k$ for the group $\Mp_2(\Z)$ with representation $\rho_L$.
The subspaces of holomorphic modular forms and cusp forms are denoted by 
$M_k(\rho_L)$ and $S_k(\rho_L)$, respectively.

\begin{theorem}
\label{thm:intbasis}
The space $M_{k}^!(\rho_{L})$ has a basis of weakly holomorphic modular forms with integral Fourier coefficients.
\end{theorem}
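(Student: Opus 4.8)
The plan is to produce the integral basis in two stages: first establish a rational structure on $M_k^!(\rho_L)$, i.e. a $\Q$-subspace $M_k^!(\rho_L)_\Q$ consisting of the forms all of whose Fourier coefficients are rational and satisfying $M_k^!(\rho_L)_\Q\otimes_\Q\C=M_k^!(\rho_L)$, and then refine a rational basis to an integral one by controlling denominators. Since the space is infinite dimensional, I would organize everything through the exhaustive filtration by pole order: let $M_{k,\le M}^!(\rho_L)$ be the forms whose components have a pole of order at most $M$ at the cusp. Each such piece is finite dimensional, because multiplication by a suitable power $\Delta^{M+1}$ (which carries the trivial representation) injects it into the finite-dimensional holomorphic space $M_{k+12(M+1)}(\rho_L)$, and $\Delta$ is nowhere vanishing. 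Building the basis filtration step by filtration step, in echelon form with respect to the principal parts, will then yield a basis of the whole space.

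For the rational structure I would use Galois descent. The entries of the matrices $\rho_L(A)$ for $A\in\Mp_2(\Z)$ lie in a cyclotomic field $\Q(\zeta_{M_0})$ (coming from the roots of unity $\frake_\mu\mapsto e(Q(\mu))\frake_\mu$ under $\rho_L(T)$, the $8$-th root of unity $e(-\sig(L)/8)$, and the Gauss sum $\sqrt{|L'/L|}$ under $\rho_L(S)$). For $t$ prime to $M_0$ let $\sigma_t\in\Gal(\Q(\zeta_{M_0})/\Q)$ send $\zeta_{M_0}\mapsto \zeta_{M_0}^t$. Given $f=\sum_{\mu,n}c(n,\mu)q^n\frake_\mu$, I would define $f^{\sigma_t}$ by applying $\sigma_t$ to each coefficient and simultaneously permuting the components by the automorphism $\mu\mapsto t\mu$ of the discriminant form. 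The crucial lemma is that $f^{\sigma_t}$ again lies in $M_k^!(\rho_L)$; this rests on the explicit transformation of $\rho_L$ under $\sigma_t$, which ultimately reflects that rescaling $Q$ by $t^2$ produces an isomorphic discriminant form. Granting the lemma, the group $(\Z/M_0\Z)^\times$ acts semilinearly on $M_k^!(\rho_L)$, and descent identifies the fixed forms as a $\Q$-structure spanning over $\C$, giving a basis with rational Fourier coefficients. (Alternatively one can extract rationality from the duality between $M_k^!(\rho_L)$ and $S_{2-k}(\rho_{L^-})$: a weakly holomorphic form is determined modulo $M_k(\rho_L)$ by its principal part, the admissible principal parts are exactly those annihilating the cusp forms under the rational coefficient pairing, and the holomorphic and cusp form spaces inherit $\Q$-structures from scalar forms on $\Gamma(N)$.)

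To pass from rational to integral, I would show that every form in $M_k^!(\rho_L)_\Q$ has \emph{bounded} denominators, so that inside each finite-dimensional step $M_{k,\le M}^!(\rho_L)$ the integral-coefficient forms constitute a full lattice. Bounded denominators I would deduce from the congruence nature of the forms: the components live on $\Gamma(N)$, and a rational weakly holomorphic form for a congruence group is an integral polynomial combination of forms with integral $q$-expansions up to clearing the finitely many denominators of that combination (equivalently, one invokes the $q$-expansion principle over $\Z[1/N]$). Scaling each echelon basis element to be primitive over $\Z$ then gives the desired integral basis.

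The main obstacle is the Galois-equivariance lemma for the Weil representation: making precise how $\rho_L(A)$ is conjugated by $\sigma_t$, together with the matching permutation of $L'/L$, requires carefully tracking the metaplectic cocycle and the Gauss-sum normalizations, and this is the technical heart of the argument. The bounded-denominator step is the secondary difficulty, since finite dimensionality alone does not bound the denominators of a single form; one genuinely needs that the forms are congruence forms.
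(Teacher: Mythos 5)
Your reduction skeleton is exactly the paper's: filter $M_k^!(\rho_L)$ by pole order, use multiplication by $\Delta^{\ell}$ to identify each finite-dimensional piece with the holomorphic space $M_{k+12\ell}(\rho_L)$ (integrality passes back because $\Delta^{-\ell}$ has integral Fourier coefficients), and extract a basis from the union. The difference is at the key input: the paper simply cites McGraw's theorem for an integral basis of the holomorphic spaces, whereas you attempt to reprove it by Galois descent plus bounded denominators. That plan is legitimate in principle --- it is essentially McGraw's own method --- but the crucial lemma on which you hang everything is false as you have stated it.

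You define $f^{\sigma_t}$ by applying $\sigma_t$ to the Fourier coefficients \emph{and} permuting the components by $\mu\mapsto t\mu$, and you claim $f^{\sigma_t}\in M_k^!(\rho_L)$. This already fails at the level of the $T$-transformation. Since $\rho_L(T)\chi_\mu=e(Q(\mu))\chi_\mu$, the $\mu$-component of any $f\in M_k^!(\rho_L)$ is supported on exponents in $\Z+Q(\mu)$, and applying $\sigma_t$ coefficientwise does not change this support; but placing that component in the slot $t\mu$ would require support in $\Z+Q(t\mu)=\Z+t^2Q(\mu)$. Because the level $N$ is the smallest positive integer with $NQ(\mu)\in\Z$ for all $\mu$, there is a $\mu$ for which $Q(\mu)$ has exact denominator $N$, so the two conditions are incompatible whenever $t^2\not\equiv 1\pmod{N}$; for such $t$ and any form with enough non-zero components (an Eisenstein series, say) your $f^{\sigma_t}$ lies outside $M_k^!(\rho_L)$, and the semilinear action needed for descent does not exist. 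The correct statement --- and this is precisely the non-trivial content of McGraw's theorem --- is that coefficientwise conjugation with \emph{no} permutation preserves the space: commuting $\sigma_t$ past the $\SL_2$-action via the $q$-expansion principle produces the Shimura-reciprocity twist $\gamma\mapsto \kzxz{1}{0}{0}{t}\gamma\kzxz{1}{0}{0}{t}^{-1}$, and this twist is exactly absorbed by the action of $\rho_L$ on diagonal matrices (a permutation of $L'/L$ times a quadratic character), so that no permutation of components survives. Your heuristic that ``rescaling $Q$ by $t^2$ gives an isomorphic discriminant form'' describes the abstract conjugate representation ($\rho_L^{\sigma_t}$ is the Weil representation of $Q$ rescaled by $t$), but the geometric, coefficientwise Galois action is not the abstract one; conflating the two is exactly where the error enters. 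Note also that your parenthetical fallback (duality with $S_{\kappa}(\rho_{L^-})$ and ``inherited'' $\Q$-structures) presupposes a rational structure on the vector-valued cusp form space, which is again the statement at issue. Once the lemma is corrected to the permutation-free form --- or simply replaced by the citation of McGraw, as the paper does --- your remaining steps (descent, bounded denominators over $\Z[1/N]$, scaling an echelon basis to be primitive) go through.
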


\begin{proof}
This result is a consequence of \cite[Theorem 5.6]{McG}. For the convenience of the reader we briefly explain how it can be deduced.

If $\ell$ is a non-negative integer, we write $M_{k}^{!,\ell}(\rho_{L})$
 for the subspace of $M_{k}^{!}(\rho_{L})$ consisting of those forms whose pole order at the cusp at $\infty$ is less or equal to $\ell$. Hence we have
\[
M_{k}^{!}(\rho_{L})= \bigcup_{\ell\geq 0} M_{k}^{!,\ell}(\rho_{L}).
\]
For every $\ell$ there is an isomorphism of vector spaces
\[
M_{k}^{!,\ell}(\rho_{L})\longrightarrow M_{k+12\ell}(\rho_{L}),\quad f\mapsto \Delta^{\ell}f,
\]
where $\Delta=q\prod_{j\geq 1}(1-q^j)^{24}$ is the usual discriminant function.
According to  \cite[Theorem 5.6]{McG}, the space $M_{k+12\ell}(\rho_{L})$ has a basis of modular forms with integral coefficients.
The inverse image under the above isomorphism defines a basis $B_\ell$ of $M_{k}^{!,\ell}(\rho_{L})$ with integral coefficients.
We obtain a basis  of $M_{k}^{!}(\rho_{L})$ with integral coefficients by taking a maximal linearly independent subset of $\bigcup_{\ell} B_\ell$.
\end{proof}

\subsection{Eisenstein series}
%Here we temporarily consider an even lattice $(L,Q)$ of arbitrary signature $(b^+,b^-)$
Here we recall some facts about $\C[L'/L]$-valued Eisenstein series from \cite{BK}.
Let $\kappa\in \frac{1}{2}\Z$ with $2\kappa \equiv b^+-b^- \pmod{4}$.
Assume that $\kappa>2$.
 %(note that we work implicitly with the lattice $(L,-Q)$ in \cite{BK}).

Let $\Gamma'_\infty\subset\Mp_2(\Z)$ be the stabilizer of the cusp $\infty$, that is, the subgroup of pairs $(g,\sigma)\in \Mp_2(\Z)$ for which $g$ is of the form $\pm \kzxz{1}{*}{0}{1}$. Let $(\chi_\mu)_{\mu\in L'/L}$ be the standard basis of $\C[L'/L]$. The element $\chi_0\in \C[L'/L]$ 
transforms under $\Gamma'_\infty$ with a character of the center.
The corresponding Eisenstein series
\begin{align*}
E_{\kappa,L}(\tau)&= \sum_{(g,\sigma)\in \Gamma'
_\infty\bs \Mp_2(\Z)} \sigma(\tau)^{-2\kappa} \cdot \big(\rho_L(g,\sigma)^{-1}\chi_0\big)
\end{align*}
defines a holomorphic function in $\tau\in\H$, satisfying the transformation law
\[
E_{\kappa,L}(\gamma\tau) = \sigma(\tau)^{2\kappa} \rho_L(\gamma) E_{\kappa,L}(\tau)
\]
for all $\gamma=(g,\sigma)\in \Mp_2(\Z)$. In particular, $E_{\kappa,L}(\tau)$ belongs to $M_\kappa(\rho_L)$.
It has a Fourier expansion
\[
E_{\kappa,L}(\tau) = \sum_{\mu\in L'/L}\sum_{m\geq 0} e_{\kappa,L}(m,\mu) \cdot q^m \chi_\mu
\]
with coefficients $e_{\kappa,L}(m,\mu)$, and $q=e^{2\pi i\tau}$.
The constant term of $E_{\kappa,L}$ is given by
\[
e_{\kappa,L}(0,\mu)=\begin{cases} 1,&\text{if $\mu=0$,}\\
0,&\text{if $\mu\neq 0$.}
\end{cases} 
%\delta_{\mu,0}.
\]
This implies that $E_{\kappa,L}$ does not vanish identically.

If $\kappa=2$, the Eisenstein series $E_{\kappa,L}(\tau)$ can be defined similarly using the usual `Hecke trick'. It has the same properties as in the case $\kappa>2$ with the only difference that in the constant term an additional  non-holomorphic contribution (a multiple of $\Im(\tau)^{-1}$) can occur. The coefficients with positive index are still constant (see e.g.~\cite[Section 3]{BrKue}).

The Fourier expansion of this Eisenstein series was computed in \cite{BK} and \cite{KY}. (Note that in \cite{BK} it was worked implicitly with the lattice $(L,-Q)$. Moreover, the Eisenstein series $2E_{\kappa,L}$ was considered.)
A first result is the following.

\begin{proposition}
\label{prop:nonneg}
For all $m\in \Q_{>0}$ and $\mu \in L'/L$ the coefficients $e_{\kappa,L}(m,\mu)$ are rational numbers.
Moreover, the quantity
\[
(-1)^{(2\kappa-b^++b^-)/4}e_{\kappa,L}(m,\mu)
\]
is non-negative.
\end{proposition}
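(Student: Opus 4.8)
The plan is to extract both assertions from the explicit evaluation of the Fourier coefficients of $E_{\kappa,L}$ carried out in \cite{BK} and \cite{KY}; I would begin by recalling its structure. Unfolding the sum over $\Gamma'_\infty\bs\Mp_2(\Z)$ that defines $E_{\kappa,L}$ against $q^m$ reduces the computation of $e_{\kappa,L}(m,\mu)$, for $m>0$, to a product of local Whittaker integrals, one for each place. The resulting identity has the shape
\[
e_{\kappa,L}(m,\mu)= C_\kappa\cdot m^{\kappa-1}\cdot \prod_{p} b_p(m,\mu),
\]
where each $b_p(m,\mu)$ is a non-negative rational local representation density at the finite prime $p$ (a normalized count of solutions of $Q(\lambda)\equiv m$ in $\mu+L$ modulo powers of $p$), the product over $p$ converges, and $C_\kappa$ is the archimedean factor, assembled from $\Gamma(\kappa)$, a power of $2\pi$ and a root of unity. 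It is precisely because the signature $(b^+,b^-)$ is indefinite that $C_\kappa$ is not a positive volume but carries a sign.

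For the rationality I would argue as in the classical scalar case. The product over the primes $p\nmid 2m\det(L)$ assembles into a special value of a Dirichlet $L$-function with a real character determined by $L$; together with the $\Gamma$- and $\pi$-factors hidden in $C_\kappa$, its transcendental part cancels by the functional equation, leaving a generalized Bernoulli number. What remains is this rational number times $m^{\kappa-1}$ times the finitely many manifestly rational densities $b_p$ with $p\mid 2m\det(L)$. This is the vector-valued analogue of the classical identity $\zeta(1-k)=-B_k/k\in\Q$, and it yields $e_{\kappa,L}(m,\mu)\in\Q$. (Alternatively, one can argue structurally: $E_{\kappa,L}$ lies in the Eisenstein part of $M_\kappa(\rho_L)$, which carries an $\Aut(\C)$-stable $\Q$-structure by Theorem~\ref{thm:intbasis}, and it is pinned down there by its rational constant term $\chi_0$.)

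For the sign the decisive observation is that every finite factor is non-negative, so the sign of the coefficient is concentrated in the archimedean factor. Each $b_p(m,\mu)\geq 0$ by its interpretation as a solution count, and a convergent product of non-negative reals is again non-negative; hence $\sgn e_{\kappa,L}(m,\mu)=\sgn C_\kappa$, uniformly in $m$ and $\mu$. It then remains to evaluate the archimedean Whittaker integral and to fold in the eighth root of unity $(\sqrt i)^{\,b^--b^+}$ built into $\rho_L$: the hypothesis $2\kappa\equiv b^+-b^-\pmod 4$ forces the resulting phase to be real and equal to $(-1)^{(2\kappa-b^++b^-)/4}$. Multiplying $e_{\kappa,L}(m,\mu)$ by this sign therefore cancels it and leaves the claimed non-negative quantity.

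I expect the one delicate point to be this final sign bookkeeping. One must combine the power of $i$ produced by the archimedean integral (a confluent hypergeometric function, degenerate in the holomorphic case) with the root of unity in the Weil representation and verify that the congruence condition makes their product exactly $(-1)^{(2\kappa-b^++b^-)/4}$, with no residual factor of $i$ or $-1$; the half-integral weight case, where $2\kappa$ is odd, needs particular care. By contrast, the non-negativity of the finite densities and the convergence of their product are standard, and the rationality is immediate once the cancellation of $\pi$-powers is recognized.
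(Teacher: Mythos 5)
Your overall route is the paper's route: the paper's proof is essentially a citation of the explicit formulas of \cite{BK} (rationality is Corollary 8 there; non-negativity is Theorem 7 there plus standard bounds for Dirichlet $L$-functions), and your sketch reconstructs those formulas via the local--global factorization. Your rationality argument (functional equation, generalized Bernoulli numbers, finitely many rational bad factors) is sound and matches how \cite{BK} proves Corollary 8; only your parenthetical alternative via Theorem \ref{thm:intbasis} is shaky, since pinning $E_{\kappa,L}$ down by its constant term requires knowing the Eisenstein/cuspidal splitting is itself defined over $\Q$.

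The genuine gap is in the non-negativity step. You deduce $\sgn e_{\kappa,L}(m,\mu)=\sgn C_\kappa$ from the claim that each finite Euler factor $b_p(m,\mu)$ is a local representation density, i.e.\ a normalized count of solutions of $Q(\lambda)\equiv m$ in $\mu+L$ modulo powers of $p$. That identification is valid only at the Siegel--Weil point $\kappa=(b^++b^-)/2$, the setting of Theorems \ref{thm:eiseven} and \ref{thm:eisodd}. The proposition, however, is stated for \emph{every} half-integral $\kappa>2$ with $2\kappa\equiv b^+-b^-\pmod 4$, and the paper genuinely needs this generality: in Lemma \ref{lem:eisl} it is applied to $E_{k,L^-}$ with $k=1-n/2+12b$, which for large $b$ is far from the Siegel--Weil point $1+n/2$ of $L^-$. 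Off the Siegel--Weil point the $p$-factor is the local density polynomial (local Whittaker polynomial) evaluated at a different argument; it is not a solution count, and its non-negativity is not manifest --- at general weight the explicit formulas present the coefficient through M\"obius- and character-twisted divisor sums and ratios of $L$-values, whose positivity must be \emph{proved}, e.g.\ by showing the $d=1$ term dominates the twisted sum (using $\kappa\geq 5/2$) and that quadratic $L$-values and Euler products are positive in the region of absolute convergence. That is exactly what the paper's appeal to ``standard bounds for Dirichlet $L$-functions in the region of convergence'' means, and the same kind of estimate is carried out explicitly in the proof of Proposition \ref{prop:eislb}. So the step ``every finite factor is a count, hence non-negative'' fails for precisely the weights the proposition is needed for, and must be replaced by these estimates; your deferred archimedean sign bookkeeping is, by contrast, harmless, since it is contained in the cited Theorem 7 of \cite{BK}.
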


\begin{proof}
The rationality of the coefficients is Corollary 8 in \cite{BK}. The non-negativity follows from Theorem 7 in \cite{BK} by means of standard bounds for Dirichlet $L$-functions in the region of convergence.
\end{proof}

Now we specialize to the case that
%$L$ has signature $(n,2)$ and
$\kappa=(b^++b^-)/2$, still assuming that $\kappa \geq 2$. The condition that $2\kappa \equiv b^+-b^- \pmod{4}$ is then equivalent to requiring that $b^-$ is even.
For $\mu\in L'/L$ we let $d_\mu=\min\{b\in \Z_{>0}:\; b\mu=0\}$ be the order of $\mu$.
If $m\in \Z+Q(\mu)$ we write
\[
N_{m,\mu}(a)= \left|\{ r\in L/aL:\; Q(r+\mu)\equiv m\pmod{a}\}\right|.
\]
This is a multiplicative function in $a$. Theorem 11 of \cite{BK} gives the following explicit formulas for the Fourier coefficients of $E_{\kappa,L}$.

\begin{theorem}
\label{thm:eiseven}
Assume that $b^++b^-$ is even. Let $\mu\in L'/L$, and let $m\in \Z+Q(\mu)$ be positive. Then
%$e_{\kappa,L}(m,\mu)$ of the Eisenstein series $E_\kappa(\tau)$ is equal to
\begin{align*}
e_{\kappa,L}(m,\mu)&= \frac{(2\pi)^{\kappa} m^{\kappa-1}(-1)^{b^-/2}}{\sqrt{|L'/L|}\Gamma(\kappa)}
\cdot \frac{\sigma_{1-\kappa}(d_\mu^2m,\chi_{4D})}{L(\kappa,\chi_{4D})}\prod_{\substack{\text{$p$ prime}\\p\mid 2N}} \frac{N_{m,\mu}(p^{w_p})}{p^{(2\kappa-1)w_p}},
\end{align*}
where $D$ denotes the discriminant $D=(-1)^{(b^++b^-)/2}\det(L)$,
and
\[
w_p=w_p(m,\mu)= 1+2\ord_p(2d_\mu m).
\]
\end{theorem}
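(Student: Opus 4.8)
The plan is to compute the Fourier expansion of $E_{\kappa,L}$ directly from its definition as an average over $\Gamma'_\infty\bs\Mp_2(\Z)$ by the unfolding method, separating the archimedean contribution from a Dirichlet series of Gauss sums and then evaluating that series as an Euler product of local densities. First I would fix coset representatives for $\Gamma'_\infty\bs\Mp_2(\Z)$: since $\Gamma'_\infty$ contains the elements lying over $\pm\kzxz{1}{*}{0}{1}$, the nontrivial cosets are indexed by bottom rows $(c,d)$ with $c>0$, $\gcd(c,d)=1$, and $d$ taken modulo $c$, while the identity coset contributes only the constant term to the component $\mu=0$. For each coset with $c>0$ I would substitute the explicit formula for $\rho_L(g,\sigma)^{-1}\chi_0$, whose $\chi_\mu$-component is a normalized quadratic Gauss sum over $L'/L$; this is where the normalization $1/\sqrt{|L'/L|}$ and a fourth root of unity depending on $b^+-b^-$ enter. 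Summing over $d\bmod c$ and applying the Lipschitz summation formula in the remaining variable then expresses, for $m>0$, the coefficient $e_{\kappa,L}(m,\mu)$ as a single Dirichlet series in $c$, with the $\tau$-dependence resolved into the archimedean factor $(2\pi)^{\kappa}m^{\kappa-1}/\Gamma(\kappa)$.

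The powers of $i$ accumulated from the Lipschitz formula and from the Weil-representation Gauss sums combine into the sign $(-1)^{b^-/2}$. For the remaining finite part, additive-character orthogonality modulo $c$ (which detects the congruence $Q(r+\mu)\equiv m\pmod c$) rewrites the Gauss-sum coefficients in terms of the solution counts $N_{m,\mu}(c)$. Because $N_{m,\mu}$ is multiplicative in $c$, the Dirichlet series factors as an Euler product $\prod_p\alpha_p$, whose $p$-factor is the local representation density $\alpha_p=\lim_{k\to\infty}N_{m,\mu}(p^k)\,p^{-(2\kappa-1)k}$.

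It remains to evaluate these local densities. For $p\nmid 2N$ the lattice is $p$-unimodular, and the densities $N_{m,\mu}(p^k)$ obey the classical recursion that sums the relevant $p$-series in closed form. The leading quadratic Gauss sum over $L/pL$ evaluates through the Legendre symbol $\leg{\cdot}{p}$ and introduces the character $\chi_{4D}$ with $D=(-1)^{\kappa}\det(L)$; the upshot is the identity $\alpha_p=(1-\chi_{4D}(p)p^{-\kappa})\sum_{j=0}^{\ord_p(m)}\chi_{4D}(p)^{j}p^{j(1-\kappa)}$, which is precisely the $p$-factor of $\sigma_{1-\kappa}(d_\mu^2m,\chi_{4D})/L(\kappa,\chi_{4D})$. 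Since $\chi_{4D}(p)=0$ for $p\mid 2N$, the bad primes contribute trivially to both $\sigma_{1-\kappa}(d_\mu^2m,\chi_{4D})$ and $L(\kappa,\chi_{4D})$, so the good primes alone assemble into the global ratio.

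For $p\mid 2N$ one cannot sum the $p$-series in closed form, but the key point is that the normalized count $N_{m,\mu}(p^k)\,p^{-(2\kappa-1)k}$ stabilizes once $k\geq w_p=1+2\ord_p(2d_\mu m)$, so $\alpha_p=N_{m,\mu}(p^{w_p})\,p^{-(2\kappa-1)w_p}$ and the bad primes contribute the finite product in the statement. Collecting the archimedean factor, the sign, the normalization $1/\sqrt{|L'/L|}$, the good-prime ratio, and the bad-prime product yields the asserted formula. I expect the main obstacle to be exactly this local analysis at the bad primes, and above all at $p=2$: verifying the stabilization index $w_p$, evaluating the relevant quadratic Gauss sums over $L/p^kL$, and tracking the fourth roots of unity and the character $\chi_{4D}$ with enough care to pin down the overall constant.
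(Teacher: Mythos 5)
You should know at the outset that the paper contains no proof of this theorem: it is quoted directly from Bruinier--Kuss [BK, Theorem 11] (with the remark that [BK] works with $(L,-Q)$ and with $2E_{\kappa,L}$). So there is no ``paper proof'' to compare against; what your proposal does is reconstruct, in outline, the strategy of that cited reference, and it reconstructs it correctly: unfolding over $\Gamma'_\infty\backslash\Mp_2(\Z)$, insertion of the Weil-representation Gauss sums (source of $1/\sqrt{|L'/L|}$ and the roots of unity), Lipschitz summation producing $(2\pi)^\kappa m^{\kappa-1}/\Gamma(\kappa)$, orthogonality converting the Gauss sums into the counts $N_{m,\mu}(c)$, multiplicativity giving an Euler product of local densities, closed-form evaluation at $p\nmid 2N$ (your use of $\ord_p(m)$ rather than $\ord_p(d_\mu^2m)$ is harmless there, since $d_\mu\mid N$), and truncation of the bad-prime factors at the stabilization index $w_p$. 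Your observation that $\chi_{4D}(p)=0$ for all $p\mid 2N$, so that the $\sigma/L$-ratio is a purely good-prime object, is also correct and is exactly why the formula separates cleanly into a good-prime ratio times a finite bad-prime product.

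That said, as a standalone proof the proposal defers precisely the statements that carry all the content, and you say so yourself. The two load-bearing facts are: (a) the normalized counts $N_{m,\mu}(p^k)\,p^{-(2\kappa-1)k}$ stabilize for $k\geq w_p=1+2\ord_p(2d_\mu m)$, a Hensel/Siegel-type local density statement whose proof (not just its statement) is needed to justify replacing the infinite local limit by the single value at $k=w_p$; and (b) the good-prime evaluation pinning down the character as $\chi_{4D}$ with $D=(-1)^{\kappa}\det(L)$ together with the root-of-unity bookkeeping that collapses to the sign $(-1)^{b^-/2}$ -- this is where Milgram's formula for the Gauss sum of the discriminant form enters, and it is the step most prone to sign errors. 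Both are established in [BK] by explicit Gauss-sum manipulations; your sketch identifies them as the obstacles but does not resolve them. In short: right route (indeed the only available one, and the one the cited source takes), honest about where the work lies, but the decisive local lemmas are asserted rather than proved.
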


\begin{theorem}
\label{thm:eisodd}
Assume that $b^++b^-$ is odd. Let $\mu\in L'/L$, and let $m\in \Z+Q(\mu)$ be positive.
Write $md_\mu^2= m_0 f^2$ for positive integers $m_0,f$ with $(f,2N)=1$ and $\ord_p(m_0)\in \{0,1\}$ for all primes $p$ coprime to $2N$.
Then
%$e_{\kappa,L}(m,\mu)$ of the Eisenstein series $E_\kappa(\tau)$ is equal to
\begin{align*}
e_{\kappa,L}(m,\mu)&= \frac{(2\pi)^{\kappa} m^{\kappa-1}(-1)^{b^-/2}}{\sqrt{|L'/L|}\Gamma(\kappa)}
\cdot \frac{L(\kappa-1/2,\chi_{D'})}{\zeta(2\kappa-1)}\\
&\phantom{=}{}\times \sum_{d\mid f} \mu(d)\chi_{D'}(d)d^{1/2-\kappa}\sigma_{2-2\kappa}(f/d)
\prod_{\substack{\text{$p$ prime}\\p\mid 2N}} \frac{N_{m,\mu}(p^{w_p})}{(1-p^{1-2\kappa})p^{(2\kappa-1)w_p}},
\end{align*}
where $D'$ denotes the discriminant $D'=2(-1)^{(b^++b^-+1)/2}m_0\det(L)$,
and
\[
w_p=w_p(m,\mu)= 1+2\ord_p(2d_\mu m).
\]
\end{theorem}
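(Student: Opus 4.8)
The plan is to compute the coefficient by the classical unfolding of the Eisenstein series and the subsequent evaluation of the resulting singular series as an Euler product of local representation densities, following the standard method for vector valued Eisenstein series. First I would choose coset representatives for $\Gamma'_\infty\bs\Mp_2(\Z)$ indexed by the bottom rows $(c,d)$ of elements of $\Sl_2(\Z)$ with $c\geq 0$ and $\gcd(c,d)=1$ (together with the metaplectic datum), the term $c=0$ producing the constant term $\chi_0$. Extracting the $\chi_\mu$-component of the $m$-th Fourier coefficient from each term and summing, one is led to an expression of the shape
\[
e_{\kappa,L}(m,\mu)=\frac{(-2\pi \imag)^{\kappa} m^{\kappa-1}}{\Gamma(\kappa)}\sum_{c\geq 1} c^{-2\kappa}\, H_c(m,\mu),
\]
where $H_c(m,\mu)$ is the generalized Gauss sum obtained by pairing the $(0,\mu)$-entry of $\rho_L$ attached to the bottom row $(c,d)$ against $e^{2\pi \imag md/c}$ and summing over $d\in(\Z/c\Z)^\times$. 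Because $b^++b^-$ is odd, the half-integral automorphy factor $\sigma(\tau)^{-2\kappa}$ makes these Sali\'e-type rather than Kloosterman sums; the archimedean integral over a horocycle is responsible for the factor $(2\pi)^\kappa m^{\kappa-1}/\Gamma(\kappa)$ and, through the Weil index of the metaplectic cocycle, for the phase $(-1)^{b^-/2}$ and the normalization $1/\sqrt{|L'/L|}$.

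Next I would show that the arithmetic sum $\sum_{c\geq 1} c^{-2\kappa}H_c(m,\mu)$ is multiplicative in $c$ and hence factors as an Euler product $\prod_p \beta_p(m,\mu)$ of local densities; each $\beta_p$ is, up to normalization, the $p$-adic density of representations of $m$ by the quadratic form on $\mu+L$, computed from the stabilized values $N_{m,\mu}(p^k)$. This is where the explicit representation numbers enter: for the bad primes $p\mid 2N$ the local lattice is not unimodular, the density does not simplify, and one is left with exactly the factor $N_{m,\mu}(p^{w_p})/\big((1-p^{1-2\kappa})p^{(2\kappa-1)w_p}\big)$, the truncation exponent $w_p=1+2\ord_p(2d_\mu m)$ being chosen so that $N_{m,\mu}(p^{w_p})$ has stabilized.

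For the good primes $p\nmid 2N$ the lattice is $p$-adically unimodular and the local density of an odd rank form is the standard one, producing the Euler factor $(1-p^{-(2\kappa-1)})(1-\chi_{D'}(p)p^{-(\kappa-1/2)})^{-1}$ whenever $p\nmid f$. Assembling these over all $p\nmid 2N$, and restoring the finitely many factors at $p\mid f$ that must be corrected because $\ord_p(md_\mu^2)\geq 2$ there, I expect the infinite part of the product to collapse to $L(\kappa-1/2,\chi_{D'})/\zeta(2\kappa-1)$ times the finite divisor sum $\sum_{d\mid f}\mu(d)\chi_{D'}(d)d^{1/2-\kappa}\sigma_{2-2\kappa}(f/d)$. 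Here the quadratic character $\chi_{D'}$, with discriminant $D'=2(-1)^{(b^++b^-+1)/2}m_0\det(L)$ depending on the index $m$ through $m_0$, arises from the Jacobi symbols in the Sali\'e sums; this index dependence of the character is the hallmark of the half-integral weight situation and forces the decomposition $md_\mu^2=m_0f^2$.

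The main obstacle I anticipate is precisely this good-prime computation in the half-integral case: one must track the Weil index and the Gauss-sum evaluations carefully enough to pin down $\chi_{D'}$ (its exact conductor, including the factor $2$ and the sign) and the $L$-function argument $\kappa-1/2$, and then reassemble the truncated Euler products at the primes dividing $f$ into the clean divisor sum $\sum_{d\mid f}\mu(d)\chi_{D'}(d)d^{1/2-\kappa}\sigma_{2-2\kappa}(f/d)$ rather than an opaque product of local corrections. Matching the overall constant $(-1)^{b^-/2}/\sqrt{|L'/L|}$ against the archimedean and Gauss-sum phases is routine but error prone, and I would verify it on a low rank example before committing to the final normalization.
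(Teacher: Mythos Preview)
The paper does not give a proof of this theorem at all: immediately before the statements of Theorems~\ref{thm:eiseven} and~\ref{thm:eisodd} it simply records that ``Theorem 11 of \cite{BK} gives the following explicit formulas for the Fourier coefficients of $E_{\kappa,L}$,'' and then quotes the result. So there is nothing to compare your argument against in this paper.

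That said, your outline is essentially the argument carried out in the cited reference \cite{BK}. The structure---unfold to a Dirichlet series $\sum_{c\geq 1} c^{-2\kappa} H_c(m,\mu)$ of generalized Gauss/Sali\'e sums, use multiplicativity to write this as an Euler product of local representation densities, keep the factors at $p\mid 2N$ as the stabilized counts $N_{m,\mu}(p^{w_p})$, and for $p\nmid 2N$ evaluate the unimodular local densities to produce $L(\kappa-\tfrac12,\chi_{D'})/\zeta(2\kappa-1)$ together with the finite correction $\sum_{d\mid f}\mu(d)\chi_{D'}(d)d^{1/2-\kappa}\sigma_{2-2\kappa}(f/d)$---is exactly the line of \cite{BK}. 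One small caution: since $\kappa$ is a genuine half-integer here, the shorthand $(-2\pi\imag)^\kappa$ in your first displayed formula is not well defined as written; the archimedean contribution and the Weil index must be tracked together through the metaplectic automorphy factor, which is precisely the delicate bookkeeping you flag at the end.
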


From this result we obtain the following lower bound for the coefficients.

\begin{proposition}
\label{prop:eislb}
Assume that $\kappa=\frac{b^++b^-}{2}>2$, and let
$A\geq 0$. There exists a constant $C>0$ (depending only on $A$ and $L$)
such that for all $(m,\mu)\in \Q_{>0}\times L'/L$  satisfying
\begin{enumerate}
\item[(i)] $m$ is represented by $L+\mu$,
\item[(ii)] $\ord_p(m)\leq A$ for all primes $p$ dividing $2N$,
\end{enumerate}
we have
\[
(-1)^{b^-/2}e_{\kappa,L}(m,\mu)>C\cdot m^{\kappa-1}.
\]
\end{proposition}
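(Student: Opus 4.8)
The plan is to substitute the explicit formula of Theorem~\ref{thm:eiseven} and to bound each of its three non-trivial factors from below by a positive constant depending only on $A$ and $L$; the factor $m^{\kappa-1}$ then accounts for the asserted growth. Since $\kappa=(b^++b^-)/2>2$ and $b^-$ is even, $\kappa$ is an integer with $\kappa\geq 3$, so $\kappa-1>1$ and both $\zeta(\kappa-1)$ and $L(\kappa,\chi_{4D})$ are given by convergent Euler products. Multiplying the formula by $(-1)^{b^-/2}$ cancels the sign $(-1)^{b^-/2}$ occurring there, so it remains to bound three quantities from below. The prefactor $\tfrac{(2\pi)^\kappa}{\sqrt{|L'/L|}\,\Gamma(\kappa)\,L(\kappa,\chi_{4D})}$ is a fixed positive constant depending only on $L$: the discriminant $D=(-1)^{(b^++b^-)/2}\det(L)$ depends only on $L$, and for $\kappa>1$ the value $L(\kappa,\chi_{4D})=\prod_p(1-\chi_{4D}(p)p^{-\kappa})^{-1}$ is a product of positive factors, hence positive.

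The main step is a uniform lower bound for the divisor sum $\sigma_{1-\kappa}(d_\mu^2m,\chi_{4D})$. Using its multiplicativity I would write it as a product over primes $p\mid d_\mu^2m$ of local factors $\sum_{j=0}^{e}\chi_{4D}(p)^j p^{(1-\kappa)j}$ with $e=\ord_p(d_\mu^2m)$. If $\chi_{4D}(p)\in\{0,1\}$ the local factor is $\geq 1$; the delicate case is $\chi_{4D}(p)=-1$, where an elementary estimate of the finite alternating sum shows that it is still $\geq 1-p^{1-\kappa}$. In every case the local factor is therefore $\geq 1-p^{1-\kappa}$, and since $0<1-p^{1-\kappa}<1$, extending the product to all primes only decreases it. This yields
\[
\sigma_{1-\kappa}(d_\mu^2m,\chi_{4D})\ \geq\ \prod_{p\mid d_\mu^2m}\bigl(1-p^{1-\kappa}\bigr)\ \geq\ \prod_p\bigl(1-p^{1-\kappa}\bigr)\ =\ \frac{1}{\zeta(\kappa-1)}\ >\ 0,
\]
a positive constant depending only on $L$.

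It remains to control the product over the bad primes, and this is where hypotheses (i) and (ii) enter. Hypothesis (ii) gives $\ord_p(m)\leq A$; together with $d_\mu\mid N$ this bounds $w_p=1+2\ord_p(2d_\mu m)\leq W$ by a constant $W=W(A,L)$ independent of $(m,\mu)$. Hypothesis (i) provides $\lambda\in\mu+L$ with $Q(\lambda)=m$, and reducing $\lambda$ modulo $p^{w_p}L$ produces a solution of the congruence defining $N_{m,\mu}(p^{w_p})$, so that $N_{m,\mu}(p^{w_p})\geq 1$ for every $p\mid 2N$. Hence
\[
\prod_{\substack{\text{$p$ prime}\\ p\mid 2N}}\frac{N_{m,\mu}(p^{w_p})}{p^{(2\kappa-1)w_p}}\ \geq\ \prod_{p\mid 2N}p^{-(2\kappa-1)W}\ >\ 0,
\]
again a positive constant depending only on $A$ and $L$. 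Multiplying the three lower bounds produces a constant $C>0$, depending only on $A$ and $L$, with $(-1)^{b^-/2}e_{\kappa,L}(m,\mu)>C\cdot m^{\kappa-1}$.

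The one genuinely delicate point is the second step: one must rule out that the quadratic character $\chi_{4D}$ forces the divisor sum to vanish or become negative, and the bound $1-p^{1-\kappa}$ in the case $\chi_{4D}(p)=-1$ is exactly what prevents this. Everything else reduces to the observation that hypotheses (i) and (ii) keep the local densities $N_{m,\mu}$ non-zero and the exponents $w_p$ uniformly bounded, so that the arithmetic correction factors stay bounded away from $0$ independently of $(m,\mu)$.
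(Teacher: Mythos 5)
There is a genuine gap: your proof only covers the case where $b^++b^-$ is even, and it rests on a false deduction. You write that since $\kappa=(b^++b^-)/2>2$ and $b^-$ is even, $\kappa$ is an integer $\geq 3$. But $b^-$ even does not force $b^++b^-$ to be even; for signature $(3,2)$, say, one has $\kappa=5/2$. In that half-integral case Theorem~\ref{thm:eiseven} does not apply at all --- the relevant formula is Theorem~\ref{thm:eisodd} --- so your substitution of the even-rank formula is unjustified for exactly the lattices the paper cares about: in the application, $L$ has signature $(n,2)$ and $\kappa=1+n/2$, so every odd $n$ lands in the case you omitted. The paper's own proof concentrates precisely on this odd case.

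The odd case is not a routine repetition of your argument, because there the discriminant $D'=2(-1)^{(b^++b^-+1)/2}m_0\det(L)$ \emph{depends on $m$} through $m_0$, so the character $\chi_{D'}$ varies with $(m,\mu)$ and one needs lower bounds uniform in the character. This is handled by the Euler-product estimate
\[
L(\kappa-1/2,\chi_{D'})>\frac{\zeta(2\kappa-1)}{\zeta(\kappa-1/2)},
\]
valid since $\kappa-1/2\geq 2$, together with a lower bound for the M\"obius-twisted sum over $d\mid f$, where the term $d=1$ dominates:
\[
\sum_{d\mid f}\mu(d)\chi_{D'}(d)d^{1/2-\kappa}\sigma_{2-2\kappa}(f/d)\geq 1-(\zeta(2)-1)\zeta(3)\geq 1/5.
\]
Neither estimate appears in your write-up. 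What you do prove is correct: in the even case your factorization of $\sigma_{1-\kappa}(d_\mu^2m,\chi_{4D})$ into local factors, the bound $\geq 1-p^{1-\kappa}$ when $\chi_{4D}(p)=-1$, hence $\sigma_{1-\kappa}(d_\mu^2m,\chi_{4D})\geq 1/\zeta(\kappa-1)$, and your treatment of the bad-prime product via hypotheses (i) and (ii) (global representability gives $N_{m,\mu}(p^{w_p})\geq 1$, and $\ord_p(m)\leq A$ bounds $w_p$) are all sound, and the last part carries over verbatim to the odd case. But as it stands the proposal proves the proposition only for lattices of even rank, so it does not establish the statement.
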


\begin{proof}
This is a direct consequence of Theorem \ref{thm:eiseven} and Theorem \ref{thm:eisodd}, combined with elementary estimates for $L$-functions of quadratic Dirichlet characters.

For instance, in the case when $n$ is odd, we have
\[
L(\kappa-1/2,\chi_{D'})>\frac{\zeta(2\kappa-1)}{\zeta(\kappa-1/2)}.
\]
This follows from the Euler product expansion which converges absolutely since $\kappa\geq 5/2$.
In the sum over the divisors of $f$, the term $d=1$ is dominating. Using again the fact that $\kappa\geq 5/2$, we obtain
\begin{align*}
\sum_{d\mid f} \mu(d)\chi_{D'}(d)d^{1/2-\kappa}\sigma_{2-2\kappa}(f/d)
&\geq 1-\sum_{\substack{d\mid f\\ d>1}} d^{1/2-\kappa}\sigma_{2-2\kappa}(f/d)\\
&\geq 1-(\zeta(2)-1)\zeta(3)\\
&\geq 1/5.
\end{align*}
Finally, the condition (i) implies that the representation numbers $N_{m,\mu}(a)$ modulo $a$ are all at least $1$. Condition (ii) implies that for primes $p$ dividing $2N$ the quantities $w_p$ are bounded by $1+2(\ord_p(2N)+A)$. Hence
\[
\prod_{\substack{\text{$p$ prime}\\p\mid 2N}} \frac{N_{m,\mu}(p^{w_p})}{p^{(2\kappa-1)w_p}}
%\geq \prod_{\substack{\text{$p$ prime}\\p\mid 2N}} \frac{N_{m,\mu}(p^{w_p})}{p^{(2\kappa-1)w_p}}
\]
is greater than a positive constant. This concludes the proof of the proposition.
\end{proof}

\begin{remark}
\label{rem:eislb}
If $\kappa=2$ then the assertion of Proposition \ref{prop:eislb} is still true with the slightly weaker lower bound
\[
(-1)^{b^-/2}e_{\kappa,L}(m,\mu)>C\cdot m^{\kappa-1-\eps}
\]
for any $\eps>0$, and a constant $C$ depending in addition on $\eps$. Here the extra $m^{-\eps}$ term comes from bounding $\sigma_{1-\kappa}(d_\mu^2m,\chi_{4D})$ in this case.
\end{remark}

\section{Proofs}

Here we turn to the proofs of the theorems stated in the introduction.

\subsection{Weakly holomorphic modular forms}

Throughout this subsection we assume  that $L$ has signature $(n,2)$
with $n\geq 2$
and put $\kappa=1+n/2$.  

Let $\calS$ be an infinite admissible set of pairs $(m,\mu)\in \Q_{>0}\times L'/L$ as in the introduction.
%satisfying the conditions:
%\begin{itemize}
%\item[(i)]
%if $(m,\mu)\in \calS$ then $m$ is represented by $\mu+L$,
%\item[(ii)]
%there exists an $A\geq 0$ such that $\ord_p(m)\leq A$ for all $(m,\mu)\in \calS$ and all primes $p$ dividing $2N$.
%\end{itemize}
%The first condition is equivalent to $Z(m,\mu)\neq 0$ in the divisor group of $X_\Gamma$. 
In view of  of \cite[Theorem 13.3]{Bo1}, Theorem \ref{thm:density} of the introduction is
a consequence of the following proposition.

\begin{proposition}
\label{prop:keyprop}
%Assume that $n\geq 2$ so that $\kappa \geq 2$.
There exists a weakly holomorphic modular form $f\in M_{2-\kappa}^!(\rho_{L^-})$ with integral Fourier coefficients $c_f(l,\nu)$ with the properties:
\begin{itemize}
\item[(i)]
if $(m,\mu)\in \Q_{>0}\times L'/L$ with $c_f(-m,\mu)\neq 0$, then $(m,\mu)\in \calS$,
\item[(ii)]
$c_f(0,0)\neq 0$.
\end{itemize}
\end{proposition}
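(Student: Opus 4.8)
The plan is to encode the two required properties as linear conditions on the principal part of $f$ and then to separate the Eisenstein contribution from the cuspidal obstructions by a growth comparison. First I would invoke the obstruction principle for the Weil representation (Borcherds \cite{Bo1}, see also \cite{Br1}): a finite formal principal part $\sum_{(m,\mu)} c(-m,\mu)\,q^{-m}\chi_\mu$ occurs as the principal part of some $f\in M_{2-\kappa}^!(\rho_{L^-})$ if and only if
\[
\sum_{m>0,\,\mu} c(-m,\mu)\,b_g(m,\mu)=0
\]
for every cusp form $g=\sum_{m,\nu} b_g(m,\nu)q^m\chi_\nu\in S_\kappa(\rho_L)$, where $\kappa=1+n/2$. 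The constant term is not constrained here but is pinned down by the Eisenstein series: pairing $f$ with $E_{\kappa,L}\in M_\kappa(\rho_L)$ yields a scalar weakly holomorphic form of weight $2$, whose constant term is the residue of a meromorphic differential on the compactified modular curve and hence vanishes. Using $e_{\kappa,L}(0,\mu)=\delta_{\mu,0}$ this gives the identity
\[
c_f(0,0)=-\sum_{m>0,\,\mu} c_f(-m,\mu)\,e_{\kappa,L}(m,\mu).
\]

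It therefore suffices to exhibit a finitely supported rational tuple $(c_{m,\mu})_{(m,\mu)\in\calS}$ with (a) $\sum_{(m,\mu)} c_{m,\mu}\,b_g(m,\mu)=0$ for all $g\in S_\kappa(\rho_L)$, and (b) $\sum_{(m,\mu)} c_{m,\mu}\,e_{\kappa,L}(m,\mu)\neq 0$. Property (a) makes the principal part $\sum c_{m,\mu}q^{-m}\chi_\mu$, which is supported on $\calS$, realizable, giving (i); property (b) forces $c_f(0,0)\neq 0$ via the displayed identity, giving (ii). Fixing a basis $g_1,\dots,g_d$ of $S_\kappa(\rho_L)$, conditions (a)--(b) ask for a finite-support sequence lying in the common kernel of the cuspidal functionals $c\mapsto\sum c_{m,\mu}b_{g_j}(m,\mu)$ on which the Eisenstein functional $\calE\colon c\mapsto\sum c_{m,\mu}e_{\kappa,L}(m,\mu)$ does not vanish. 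By elementary linear algebra such a sequence exists unless $\calE$ lies in the span of the finitely many cuspidal functionals, that is, unless there is a cusp form $g\in S_\kappa(\rho_L)$ with
\[
e_{\kappa,L}(m,\mu)=b_g(m,\mu)\qquad\text{for all }(m,\mu)\in\calS.
\]
Integrality is arranged afterwards by clearing denominators and using Theorem \ref{thm:intbasis}: scaling the integral, realizable principal part into the lattice of principal parts of integral-coefficient forms produces an $f$ with integral Fourier coefficients, and scaling preserves (i) and (ii).

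The heart of the matter, and the step I expect to be the main obstacle, is to rule out the last displayed coincidence by comparing growth rates. Since $\calS$ is admissible, its pairs satisfy the hypotheses of Proposition \ref{prop:eislb} with $b^-/2=1$, so $|e_{\kappa,L}(m,\mu)|>C\,m^{\kappa-1}$ for all $(m,\mu)\in\calS$, while a fixed cusp form obeys the Hecke bound $|b_g(m,\mu)|\ll m^{\kappa/2}$. As $\calS$ is infinite it contains pairs with $m\to\infty$, and for $\kappa>2$ (i.e.\ $n>2$) one has $m^{\kappa-1}\gg m^{\kappa/2}$; hence the two coefficients disagree for large $m$ unless $g=0$, but then $e_{\kappa,L}(m,\mu)\equiv 0$ on $\calS$, contradicting the lower bound. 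The threshold case $n=2$, $\kappa=2$ is the delicate one, since $m^{\kappa/2}=m$ no longer beats the Eisenstein lower bound, which is only $m^{1-\eps}$ by Remark \ref{rem:eislb}. Here I would use that the weight $\kappa=2$ is an integer, so the Deligne (Ramanujan--Petersson) bound $|b_g(m,\mu)|\ll_\eps m^{(\kappa-1)/2+\eps}=m^{1/2+\eps}$ is available and stays strictly below $m^{1-\eps}$ for small $\eps$; the same comparison then closes the argument. Finally, Theorem \ref{thm:density} follows from Proposition \ref{prop:keyprop} through the Borcherds lift \cite[Theorem 13.3]{Bo1}, which carries such an $f$ to a product of weight $c_f(0,0)/2\neq 0$ whose divisor is supported on the $Z(m,\mu)$ with $c_f(-m,\mu)\neq 0$, hence with $(m,\mu)\in\calS$.
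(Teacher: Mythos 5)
Your main-case argument is essentially sound and close in spirit to the paper's own proof: you invoke the same obstruction principle (Proposition \ref{prop:crit}), the same constant-term formula \eqref{eq:ctcond}, and you rule out the degenerate situation by comparing growth rates of Eisenstein and cuspidal Fourier coefficients. Your duality formulation (either the desired tuple exists, or $e_{\kappa,L}(m,\mu)=b_g(m,\mu)$ on all of $\calS$ for a single fixed cusp form $g$) is a clean variant of the paper's quantitative estimate with norms, and replacing the Weil bound $O(m^{\kappa/2-1/4+\delta})$ by the Deligne/Eichler--Shimura bound $O(m^{1/2+\eps})$ at $\kappa=2$ is a legitimate alternative, since for $n=2$ the weight is integral and the components of $g$ are scalar weight-two cusp forms for $\Gamma(N)$.

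There is, however, a genuine gap: the case $n=2$ with Witt rank $r(L)=2$. Your derivation of the constant-term identity --- pairing $f$ with $E_{\kappa,L}$ and taking the residue of the resulting weight-two meromorphic differential --- requires $E_{\kappa,L}$ to be holomorphic. For $n=2$ one has $\kappa=2$, and when $r(L)=2$ the weight-two Eisenstein series acquires a non-holomorphic contribution (a multiple of $\Im(\tau)^{-1}$ in its constant term); this is precisely why Proposition \ref{prop:crit} restricts formula \eqref{eq:ctcond} to $n>2$ or $n=2>r(L)$, a restriction your proof does not respect. Indeed the identity is false in this case: for $L=I\!I_{2,2}$ the input forms have weight $0$, and $j$ and $j-744$ have the same principal part but different constant terms, so no formula computing $c_f(0,0)$ from the principal part can hold; equivalently, $M_0(\rho_{L^-})\neq 0$, so the constant term is genuinely undetermined. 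Your treatment of "the threshold case $n=2$" addresses only the growth-rate issue, not this one. The paper closes this case by a separate argument: from \eqref{eq:cuspcond} and the finite dimensionality of $S_2(\rho_L)$, the infinitude of $\calS$ easily yields an $f$ satisfying (i) alone; then, since $r(L)=2$ forces an even overlattice of $L$ isomorphic to $I\!I_{2,2}$, there is a rational $\rho_{L^-}$-invariant vector $f_0\in M_0(\rho_{L^-})$ with non-zero $\chi_0$-component, and a suitable linear combination of $f$ and $f_0$ satisfies both (i) and (ii). Your proof needs this (or an equivalent) supplementary argument to cover all lattices with $n\geq 2$.
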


%Here by `integral' coefficients we mean coefficients in the $\Z$-module of functions $L'/L\to \Z$. 
To prove this proposition, we recall the following result from
\cite{Bo2} (see also Corollary~3.9 in \cite{BF}).

\begin{proposition}
\label{prop:crit}
There exists a weakly holomorphic modular form $f\in M_{2-\kappa}^!(\rho_{L^-})$ with $\Gamma'_\infty$-invariant prescribed principal part
\[
\sum_{\nu\in L'/L}\sum_{l<0} c(l,\nu)\, q^{l}\chi_\nu\in \C[L'/L][q^{-1/N}]
\]
 at the cusp $\infty$ if and only if
\begin{align}
\label{eq:cuspcond}
\sum_{\nu\in L'/L}\sum_{l>0}  c(-l,\nu)\, b(l,\nu)=0
\end{align}
for every $g=\sum_\nu\sum_l b(l,\nu) q^l\chi_\nu\in S_{\kappa}(\rho_L)$. 

If $n>2$ or $n=2>r(L)$, the constant term
$c(0,0)$ of such an $f$ is given in terms of the coefficients of the Eisenstein series $E_{\kappa,L}\in M_\kappa(\rho_L)$ by
\begin{align}
\label{eq:ctcond}
c(0,0)=-\sum_{\nu\in L'/L}\sum_{l>0}  c(-l,\nu)\, e_{\kappa,L}(l,\nu) .
\end{align}
\end{proposition}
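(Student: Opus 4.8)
The plan is to follow Borcherds's Serre-duality approach from \cite{Bo2}, reducing both assertions to a single residue computation. The key point is that $\rho_{L^-}$ and $\rho_L$ are dual under the standard bilinear pairing on $\C[L'/L]$, so for any $f\in M_{2-\kappa}^!(\rho_{L^-})$ and any $g\in M_\kappa(\rho_L)$ the product $\langle f,g\rangle$ obtained by pairing coefficientwise is a \emph{scalar-valued} weakly holomorphic modular form of weight $(2-\kappa)+\kappa=2$ for $\SL_2(\Z)$, transforming trivially. I will repeatedly use the elementary fact that such a form has vanishing constant term: writing $h=\sum_n a_n q^n$, the associated differential $h\,d\tau=\tfrac{1}{2\pi i}\sum_n a_n q^{n-1}\,dq$ is a meromorphic $1$-form on the compactified modular curve $X(1)\cong\P^1$, holomorphic away from $\infty$, so by the residue theorem its only residue $a_0/(2\pi i)$ must vanish.

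First I would prove the necessity of \eqref{eq:cuspcond} and the formula \eqref{eq:ctcond} together. Given $f$ and a cusp form $g=\sum b(l,\nu)q^l\chi_\nu\in S_\kappa(\rho_L)$, I read off the constant term of $h=\langle f,g\rangle$: the coefficient indexed by $(l,\nu)$ in $f$ meets the one indexed by $(-l,\nu)$ in $g$, and since $g$ is cuspidal only the principal part of $f$ survives, giving $a_0=\sum_{\nu}\sum_{l>0}c(-l,\nu)b(l,\nu)$; vanishing of $a_0$ is exactly \eqref{eq:cuspcond}. For the constant-term formula I instead take $g=E_{\kappa,L}$. Now the Eisenstein series contributes its own constant term $e_{\kappa,L}(0,\nu)=\delta_{\nu,0}$, so the $q^0$-coefficient of $\langle f,E_{\kappa,L}\rangle$ acquires the extra summand $c(0,0)$, and setting the total to zero yields \eqref{eq:ctcond}. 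The hypothesis $n>2$ or $n=2>r(L)$ enters precisely here: when $\kappa>2$ the series $E_{\kappa,L}$ is holomorphic and the argument is immediate, while for $\kappa=2$ it carries a non-holomorphic $\Im(\tau)^{-1}$ term, and the condition on the Witt rank---equivalently, that the Baily--Borel boundary have codimension $>1$---is what guarantees that this term does not spoil the residue computation of the constant term.

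The sufficiency of \eqref{eq:cuspcond}, which I expect to be the main obstacle, is where Serre duality does the real work. Weight-$(2-\kappa)$ forms of type $\rho_{L^-}$ are sections of a line bundle on $X(1)$ twisted by the local system of $\rho_{L^-}$, and prescribing a $\Gamma'_\infty$-invariant principal part at the cusp amounts to prescribing the polar part of such a section there. The obstruction to realizing a given polar part lives in an $H^1$, which by Serre duality on the curve is canonically dual to the global sections of the Serre-dual bundle; the dual weight is $2-(2-\kappa)=\kappa$ and the dual representation is $\rho_{L^-}^\vee=\rho_L$, so this space is exactly $S_\kappa(\rho_L)$. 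Under this duality the obstruction pairing coincides with the residue pairing computed above, whence the principal part is attained by some $f$ if and only if \eqref{eq:cuspcond} holds for all $g\in S_\kappa(\rho_L)$.

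Finally, I would record the routine bookkeeping---that $\Gamma'_\infty$-invariance is precisely the condition for a Laurent tail to define a legitimate polar part on the quotient, and that the half-integral weight together with $\rho_{L^-}$ assemble into an honest line bundle so the metaplectic cover causes no trouble. As a self-contained alternative to Serre duality, kept in reserve, one can build $f$ directly from Maass--Poincar\'e series $F_{l,\nu}$ of weight $2-\kappa$: a linear combination of these is genuinely weakly holomorphic exactly when its $\xi_{2-\kappa}$-image in $S_\kappa(\rho_L)$ vanishes, and by the Bruinier--Funke pairing (Corollary 3.9 in \cite{BF}) the Petersson products of this image against cusp forms are exactly the left-hand sides of \eqref{eq:cuspcond}; non-degeneracy of the Petersson form then forces the image to vanish once \eqref{eq:cuspcond} holds. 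Either route collapses the proposition onto the residue computation of the first step.
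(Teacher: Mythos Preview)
The paper does not actually prove this proposition: it introduces it with the sentence ``we recall the following result from \cite{Bo2} (see also Corollary~3.9 in \cite{BF})'' and then uses it as a black box in the proof of Proposition~\ref{prop:keyprop}. So there is no in-paper argument to compare against; the only content the paper adds is the remark immediately following the statement, namely that the condition $n=2>r(L)$ forces $E_{\kappa,L}$ to be genuinely holomorphic even at $\kappa=2$.

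Your proposal is a faithful and correct sketch of precisely the two proofs the paper is citing. The residue/Serre-duality argument is Borcherds's proof in \cite{Bo2}, and your alternative via Maass--Poincar\'e series and the $\xi_{2-\kappa}$-operator is the Bruinier--Funke approach underlying \cite[Corollary~3.9]{BF}. One small sharpening: where you say the Witt-rank hypothesis ``guarantees that this term does not spoil the residue computation,'' the paper's remark is more specific---the hypothesis makes the non-holomorphic $\Im(\tau)^{-1}$ contribution to $E_{2,L}$ vanish outright, so $E_{\kappa,L}$ is an honest element of $M_\kappa(\rho_L)$ and your weight-$2$ pairing argument applies without modification. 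With that clarification your proof is complete and matches the cited sources.
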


\begin{remark}
The condition on the Witt rank $r(L)$ in the second part of the proposition implies that the Eisenstein series $E_{\kappa,L}$ is holomorphic even in the case $n=2$.
\end{remark}

\begin{proof}[Proof of Proposition \ref{prop:keyprop}]
We generalize the argument of \cite[Lemma 4.11]{BBK}.
According to Theorem~\ref{thm:intbasis}, 
%to \cite{McG}, 
the space $M_{2-\kappa}^!(\rho_{L^-})$ has a basis of weakly holomorphic modular forms with integral coefficients. Hence, it suffices to show the existence of an $f\in M_{2-\kappa}^!(\rho_{L^-})$ with {\em rational} coefficients satisfying  (i) and (ii).
Let us first assume that $n>2$ or $n=2>r(L)$, so that \eqref{eq:ctcond} holds.

To lighten the notation, throughout the proof we write $M_\kappa$ for the $\Q$-vector space of holomorphic modular forms in $M_\kappa(\rho_L)$ with rational coefficients.
 %in $\Q$.
We write $S_\kappa$ for the subspace of cusp forms with rational coefficients.
The $\Q$-dual spaces are denoted by $M_\kappa^\vee$ and $S_\kappa^\vee$, respectively.
The natural inclusion $S_\kappa\to M_\kappa$ induces a surjective linear map
\[
\pr: M_\kappa^\vee\to S_\kappa^\vee ,\quad a\mapsto \pr (a).
\]
For $\mu\in L'/L$ and $m\in \Z+Q(\mu)$, we write
$a_{m,\mu}$ for the element in $M_\kappa^\vee$ taking an element
$g=\sum_\nu\sum_l b(l,\nu)q^l\chi_\nu\in M_\kappa$ to the Fourier coefficient
\[
a_{m,\mu}(g)= b(m,\mu).
\]
We let $M_{\kappa,\calS}^\vee\subset M_\kappa^\vee$ be the subspace generated by the functionals $a_{m,\mu}$ with $(m,\mu)\in \calS$.
According to Proposition \ref{prop:crit}, it suffices to show that there exists an
$a\in M_{\kappa,\calS}^\vee$ with $\pr(a)=0$ and $a(E_{\kappa,L})\neq 0$.

Let $a_1,\dots,a_d\in M_{\kappa,\calS}^\vee$ such that
$\pr(a_1),\dots,\pr(a_d)$ is a basis of $\pr(M_{\kappa,\calS}^\vee)\subset S_\kappa^\vee$.
Then for every $(m,\mu)\in \calS$ there exists a unique 
vector
$r(m,\mu)=(r_1(m,\mu),\dots,r_d(m,\mu))\in \Q^d$ such that
\[
\pr(a_{m,\mu}) = r_1(m,\mu)\cdot \pr(a_1)+\ldots+r_d(m,\mu)\cdot \pr(a_d).
\]
The linear combination
\begin{align}
\label{eq:cusp}
\tilde a_{m,\mu}:= a_{m,\mu} - r_1(m,\mu)\cdot a_1-\ldots-r_d(m,\mu)\cdot a_d \in M_{\kappa,\calS}^\vee
\end{align}
is in the kernel of $\pr$.
Evaluating $\tilde a_{m,\mu}$ at the Eisenstein series, we obtain
\begin{align}
\label{eq:eis}
\tilde a_{m,\mu}(E_{\kappa,L}) = e_{\kappa,L}(m,\mu)-r_1(m,\mu)\cdot a_1(E_{\kappa,L})-\ldots-r_d(m,\mu)\cdot a_d(E_{\kappa,L}).
\end{align}
In view of \eqref{eq:ctcond}, it suffices to show that there is an $(m,\mu)\in \calS$ such that $\tilde a_{m,\mu}(E_{\kappa,L})$ %in \eqref{eq:eis}
is non-zero.

To see this,
%we first notice that when $r(m,\mu)=0$ for some $(m,\mu)\in S$, this is a direct consequence of Proposition \ref{prop:eislb}.
%When $r(m,\mu)\neq 0$ for all $(m,\mu)\in S$
we assume on the contrary that $\tilde a_{m,\mu}(E_{\kappa,L})=0$ for all $(m,\mu)\in \calS$.
 We let $\|r\|$ be the euclidian norm of a vector $r\in \R^d$. Moreover, we also denote by $\|\cdot\|$ a norm on $S_\kappa^\vee\otimes \R$, say the operator norm.
Since $\pr(a_1),\dots,\pr(a_d)$ are linearly independent, there exists an $\eps>0$ such that
\[
\| r_1 \pr(a_1)+\ldots+r_d  \pr(a_d)\| \geq \eps \|r\|
\]
for all $r=(r_1,\dots,r_d)\in \R^d$.
By means of \eqref{eq:cusp} we obtain
\begin{align}
\label{eq:1}
\| \pr (a_{m,\mu}) \| \geq \eps \cdot \|r(m,\mu)\|.
\end{align}
On the other hand, our assumption $\tilde a_{m,\mu}(E_{\kappa,L})=0$ and \eqref{eq:eis} imply
that there is a constant $C'>0$ such that
\begin{align}
\label{eq:2}
|e_{\kappa,L}(m,\mu)|\leq C'\cdot \|r(m,\mu)\|.
\end{align}
Together, \eqref{eq:1} and \eqref{eq:2} imply that
\begin{align}
\label{eq:3}
|e_{\kappa,L}(m,\mu)| \leq\frac{C'}{\eps}\cdot \| \pr (a_{m,\mu}) \|
\end{align}
for all $(m,\mu)\in \calS$.
The Weil bound for the coefficients of (scalar valued) cusp forms of weight $\kappa$ for $\Gamma(N)$ implies that
$\| \pr (a_{m,\mu}) \| = O(m^{\kappa/2-1/4+\delta})$
as $m\to \infty$ for any $\delta>0$. Combining this with \eqref{eq:3} we obtain
\[
|e_{\kappa,L}(m,\mu)| = O(m^{\kappa/2-1/4+\delta})
\]
for $(m,\mu)\in \calS$ and $m\to \infty$, contradicting Proposition \ref{prop:eislb} and Remark \ref{rem:eislb}.

We finally consider the remaining case $n=2=r(L)$. From \eqref{eq:cuspcond} and the fact that $\calS$ is infinite, we easily deduce the existence of an $f$ satisfying condition (i) of Proposition \ref{prop:keyprop}, but possibly violating condition (ii). The fact that $r(L)=2$ implies that 
there is an even overlattice $ M \supset L$ which is isomorphic to the even unimodular lattice $I\!I_{2,2}$ of signature $(2,2)$.
%
%In fact, let $M$ be a maximal even lattice in $L_\Q$. Then every primitive isotropic vector $\ell\in M$ has level $N_\ell=1$. This implies that there is a $\zeta\in M$ with $(\zeta,\ell)=1$. Then $\zeta-Q(\zeta)\ell$ is also in $M$ and spans an integral hyperbolic plane with $\ell$ in $M$. Repeating this we get $M\cong I\!I_{2,2}$.
%
This in turn implies that $\C[L'/L]$ contains a rational vector $f_0$ which is invariant under the Weil representation $\rho_{L^-}$ and which has non-zero $\chi_0$-component. In other words, $f_0$ is a non-zero element of $M_0(\rho_{L^-})$.
A suitable linear combination of $f$ and $f_0$ satisfies both conditions of 
  Proposition \ref{prop:keyprop}.
 \end{proof}

\subsection{Proof of Theorem \ref{prop:quot}}

%We now turn to the proof of Theorem \ref{prop:quot}.
Throughout this subsection we assume
that $(L,Q)$ has signature $(n,2)$ with $n\geq 1$.
We briefly write $L^-$ for the lattice $(L,-Q)$ of signature $(2,n)$.

\begin{lemma}
\label{lem:eisl}
Let $b\in \Z$ such that $k:=1-n/2+12b $ is greater than $2$.
The Eisenstein series $E_{k,L^-}\in M_k(\rho_{L^-})$ has non-negative Fourier coefficients  $e_{k,L^-}(l,\mu)$. When $l\in -
Q(\mu+L)$ is positive, the coefficient $e_{k,L^-}(l,\mu)$ is strictly positive.
\end{lemma}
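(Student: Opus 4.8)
The plan is to treat the two assertions separately: non-negativity of all Fourier coefficients follows from the general non-negativity statement, while strict positivity at represented indices is extracted from the explicit coefficient formulas after identifying $k$ with a critical weight.

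First I would record that $E_{k,L^-}$ genuinely lies in $M_k(\rho_{L^-})$: the lattice $L^-$ has signature $(2,n)$, so $2k = 2-n+24b \equiv 2-n = b^+-b^- \pmod 4$, and $k>2$, which is exactly what is required for the holomorphic Eisenstein series of Section~\ref{sect:2}. For non-negativity I would apply Proposition~\ref{prop:nonneg} to the lattice $L^-$ with weight $\kappa=k$, where $b^+=2$ and $b^-=n$. The relevant sign exponent is then
\[
\frac{2k-b^++b^-}{4}=\frac{2k-2+n}{4}=\frac{24b}{4}=6b,
\]
which is even. Hence $(-1)^{6b}e_{k,L^-}(l,\mu)=e_{k,L^-}(l,\mu)\ge 0$ for every $l>0$, and the constant term equals $\delta_{\mu,0}$, which is also non-negative. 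This settles the first claim.

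For strict positivity at a positive $l\in -Q(\mu+L)$ the difficulty is that $k$ is in general \emph{not} the critical weight $(b^++b^-)/2=1+n/2$ of $L^-$, so Theorems~\ref{thm:eiseven}--\ref{thm:eisodd} and Proposition~\ref{prop:eislb} cannot be applied to $L^-$ directly. To remedy this I would pass to the lattice $M=L^-\oplus U^{12b-n}$, where $U$ denotes the hyperbolic plane, so that $U^{12b-n}$ is even unimodular of signature $(12b-n,12b-n)$; this is available whenever $12b\ge n$. Since adjoining an even unimodular lattice changes neither the discriminant form nor the signature modulo $8$, one has $\rho_M=\rho_{L^-}$ and hence $E_{k,M}=E_{k,L^-}$, while now $k=\tfrac12\rank(M)$ is precisely the critical weight of $M$, with $b^-_M=12b$. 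Because $l$ is represented by $\mu+L^-$, it is represented by $\mu+M$ as well, so Proposition~\ref{prop:eislb} applied to $M$ (with $A=\max_{p\mid 2N}\ord_p(l)$) yields $(-1)^{b^-_M/2}e_{k,M}(l,\mu)>C\,l^{k-1}>0$; here $b^-_M/2=6b$ is even, so $e_{k,L^-}(l,\mu)=e_{k,M}(l,\mu)>0$, as required.

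The step I expect to be the main obstacle is exactly this realization of the non-critical weight $k$ as a critical weight, which the unimodular reduction achieves only when $12b\ge n$. For the finitely many $b$ with $k>2$ but $12b<n$ one cannot enlarge $L^-$ in this fashion, and I would instead invoke the product formula for $e_{k,L^-}(l,\mu)$ underlying \cite{BK} (the formula behind Proposition~\ref{prop:nonneg}), which exhibits the coefficient as a convergent product of local densities times an archimedean normalisation; each local factor is positive precisely when $l$ is represented over the corresponding $\Z_p$, so global representability of $l$ makes every factor positive. Since non-negativity is already in hand, in this route it suffices to know the coefficient is \emph{non-zero}, and the only genuinely analytic input is the positivity of the Dirichlet $L$-values and divisor sums occurring in the formula, which follows from $L(s,\chi)>0$ for a real character $\chi$ and real $s$ in the range of absolute convergence, exactly as in the proofs of Propositions~\ref{prop:nonneg} and~\ref{prop:eislb}.
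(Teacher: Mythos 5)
Your argument is, at its core, the same as the paper's: non-negativity is read off from Proposition \ref{prop:nonneg} (your sign computation $(2k-2+n)/4=6b$ is correct), and strict positivity comes from the explicit formulas of Theorems \ref{thm:eiseven} and \ref{thm:eisodd} together with the observation that a global representation of $l$ by $\mu+L^-$ forces all the local representation numbers to be positive. The genuine difference is that you make explicit a step the paper passes over in silence: Theorems \ref{thm:eiseven}--\ref{thm:eisodd} and Proposition \ref{prop:eislb} are stated only at the critical weight $\kappa=(b^++b^-)/2$, whereas $k=1-n/2+12b$ is the critical weight of $L^-$ only when $12b=n$; the paper nevertheless cites these results for $E_{k,L^-}$ directly. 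Your stabilization $M=L^-\oplus U^{12b-n}$ is exactly the missing justification: since $\rho_{L^-}$ depends only on the discriminant form, $E_{k,M}=E_{k,L^-}$, the weight $k$ is critical for $M$, the sign $(-1)^{b_M^-/2}=(-1)^{6b}$ is $+1$, and representability transfers from $L^-$ to $M$. So for $12b\geq n$ your proof is complete and is, if anything, a more careful rendering of the paper's own argument.

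The weak point is your fallback for the values of $b$ with $k>2$ but $12b<n$ (these do occur, e.g.\ $n=20$, $b=1$). There your claim that the coefficient is a convergent product of local densities, each positive exactly when $l$ is represented over $\Z_p$, is not supported by anything cited: the identification of the local factors with representation numbers of $L^-$ (the exponent $(2\kappa-1)w_p$ in Theorems \ref{thm:eiseven}--\ref{thm:eisodd}) is precisely the critical-weight phenomenon, and off the critical weight the general formula from \cite{BK} behind Proposition \ref{prop:nonneg} involves Gauss sums and $L$-series whose term-by-term positivity is exactly what would have to be proven. Moreover, no stabilization can rescue this regime in general, since a lattice of rank $2k<n+2$ realizing the discriminant form of $L^-$ need not exist (the group $L'/L$ may require up to $n+2$ generators). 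So that paragraph is a genuine gap in your proposal. Two mitigating remarks: the paper's own one-line proof does not address this regime either, and the gap is immaterial for the way the lemma is used, since Lemma \ref{lem:pospp} is invoked in the proof of Theorem \ref{prop:quot} only for $b$ that may be taken arbitrarily large, where one can always additionally demand $12b\geq n$.
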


\begin{proof}
The non-negativity of the coefficients is a direct consequence of Proposition \ref{prop:nonneg}. When $l\in -
Q(\mu+L)$, then the congruence representation numbers $N_{l,\mu}(p^\nu)$ for the lattice $L^-$ are all positive, since there even exists a global solution.
Therefore the claimed positivity follows from Theorem \ref{thm:eiseven} and Theorem \ref{thm:eisodd}.
\end{proof}

\begin{comment}
The representation $(S_{L^-},\omega_{L^-})$ is isomorphic to the complex conjugate of the representation $(S_L,\omega_L)$. Since the Weil representation is unitary with respect to the standard scalar product $\langle\cdot,\cdot\rangle$ on $S_L$ (antilinear in the second variable), the natural map $S_L\to S_{L^-}^\vee$, $\varphi\mapsto \langle \varphi,\cdot\rangle$ induces an isomorphism of representations $\omega_L\cong \omega_{L^-}^\vee$.
\end{comment}

For $\mu\in L'/L$ we define 
\begin{align*}
%t_{L^-}(\mu)
t_\mu&= \min\{ -Q(\lambda)\mid \;\text{$\lambda\in \mu+L$ and $-Q(\lambda)>0$}\},\\
%T_{L^-}
T&= \max\{ t_\mu\mid\; \mu\in L'/L\}.
\end{align*}
Since $L$ is indefinite, $t_\mu$ has a finite value in $\frac{1}{N}\Z_{>0}$.
The coefficient $e_{k,L^-}(t_\mu,\mu)$ of the Eisenstein series $E_{k,L^-}$ of Lemma \ref{lem:eisl} is positive (for any choice of $b$).
%For all $\mu\in L'/L$ there exists a $\lambda\in \mu+L$ such that $0<-Q(\lambda)\leq T$. 
%

\begin{lemma}
\label{lem:pospp}
Let $b\in \Z_{>0}$ such that $k:= 1-n/2+12b$ is greater than $2$. There exists an element  $h\in M^!_{1-n/2}(\rho_{L^-})$ with non-negative rational Fourier coefficients $c_h(l,\mu)$ such that
\[
c_h(l,\mu) >0
\]
for all $\mu\in L'/L$ and all $l\in \Z-Q(\mu)$ with $l\geq T-b$.
\end{lemma}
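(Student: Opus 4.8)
The plan is to obtain $h$ by dividing the holomorphic Eisenstein series $E_{k,L^-}$ of Lemma~\ref{lem:eisl} by a suitable power of the discriminant function. Concretely, I would set
\[
h := \frac{E_{k,L^-}}{\Delta^b}.
\]
Since $\Delta$ is a nowhere-vanishing cusp form of weight $12$ for $\SL_2(\Z)$ with integral Fourier coefficients and $E_{k,L^-}\in M_k(\rho_{L^-})$ with $k=1-n/2+12b>2$, the quotient is a well-defined weakly holomorphic form of weight $k-12b=1-n/2$, i.e.\ an element of $M^!_{1-n/2}(\rho_{L^-})$. Its coefficients are rational because those of $E_{k,L^-}$ are rational by Proposition~\ref{prop:nonneg} (here $k>2$, so there is no non-holomorphic term) and those of $\Delta^{-b}$ are integral.

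The key elementary input is the shape of the $q$-expansion of $\Delta^{-b}$. Writing
\[
\Delta^{-b} = q^{-b}\prod_{j\geq 1}(1-q^j)^{-24b} = \sum_{r\geq -b} d_r\, q^r,
\]
I would observe that every $d_r$ with $r\geq -b$ is \emph{strictly positive}: the product $\prod_{j\geq1}(1-q^j)^{-24b}$ has non-negative coefficients and contains the factor $(1-q)^{-24b}=\sum_{m\geq0}\binom{24b+m-1}{m}q^m$, all of whose coefficients are positive, while every remaining factor has constant term $1$. Consequently the $\mu$-component of $h$ is given by the convolution
\[
c_h(l,\mu)=\sum_{r\geq -b} d_r\, e_{k,L^-}(l-r,\mu),
\]
and since $d_r>0$ and $e_{k,L^-}(m,\mu)\geq 0$ for all $m\geq0$ by Lemma~\ref{lem:eisl}, every $c_h(l,\mu)$ is non-negative.

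For the strict positivity in the range $l\geq T-b$ I would exhibit a single positive term in this convolution. Fix $\mu\in L'/L$ and recall that $t_\mu\leq T$ and that $e_{k,L^-}(t_\mu,\mu)>0$ by Lemma~\ref{lem:eisl}. Choosing $r=l-t_\mu$, the hypotheses $l\geq T-b$ and $t_\mu\leq T$ give $r\geq -b$, so $d_r>0$; moreover $l-r=t_\mu$ lies in $\Z-Q(\mu)$ and is positive, so $e_{k,L^-}(l-r,\mu)>0$. Hence this term alone contributes $d_{l-t_\mu}\,e_{k,L^-}(t_\mu,\mu)>0$, and together with the non-negativity of all remaining terms this forces $c_h(l,\mu)>0$.

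I expect the only point requiring care to be the bookkeeping of indices: one must check that $r=l-t_\mu$ is an integer (which holds because both $l$ and $t_\mu$ lie in $\Z-Q(\mu)$, so their difference is integral) and that the shift lands exactly on the guaranteed-positive Eisenstein coefficient $e_{k,L^-}(t_\mu,\mu)$. The positivity of the coefficients $d_r$ of $\Delta^{-b}$ is the essential structural fact, but it is entirely elementary; in contrast with the proof of Proposition~\ref{prop:keyprop}, no analytic estimates on $L$-functions are needed here.
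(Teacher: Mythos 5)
Your proof is correct and follows essentially the same route as the paper: defining $h=\Delta^{-b}E_{k,L^-}$, using the positivity of the coefficients of $\Delta^{-b}$ in degrees $\geq -b$, and isolating the single convolution term $d_{l-t_\mu}\,e_{k,L^-}(t_\mu,\mu)>0$ against the non-negativity of the rest. The only cosmetic difference is that you justify the positivity of the coefficients of $\Delta^{-b}$ via the factor $(1-q)^{-24b}$, whereas the paper deduces it from the positivity of the coefficients of $\Delta^{-1}$; both are equally valid.
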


\begin{proof}
% Without loss of generality we may assume 
Let $E_{k,L^-}\in M_k(\rho_{L^-})$ be the Eisenstein series of weight $k$ of Lemma \ref{lem:eisl}.
%, which we view as an element of $M_k(\omega_{L})$. 
Then
\[
h(\tau)=\Delta(\tau)^{-b} E_{k,L^-}(\tau)
\]
belongs to $M_{1-n/2}^!(\rho_{L^-})$.
The product expansion $\Delta=q\prod_{j\geq 1}(1-q^j)^{24}$ of the discriminant function implies that the Fourier coefficients $c_{\Delta^{-1}}(j)$ of $\Delta^{-1}$ with index $j \geq -1$ are all positive.
Consequently, the coefficients of 
$c_{\Delta^{-b}}(j)$ of $\Delta^{-b}$ with index $j \geq -b$ are all positive. By Lemma \ref{lem:eisl} we obtain that the coefficients of $h$  are all non-negative.

If $\mu\in L'/L$ and $l\in \Z-Q(\mu)$ with $l\geq T-b$, we have that
%$\lambda\in L'$ with $-Q(\lambda)\geq T-b$ we have that 
\begin{align*}
c_h(l,\mu) &= \sum_{j\in \Z} c_{\Delta^{-b}}(j) \cdot e_{k,L^-}(l-j,\mu) \\
&= c_{\Delta^{-b}}(l-t_\mu) \cdot e_{k,L^-}(t_\mu,\mu) 
%\\&\phantom{=}{}+  
+\sum_{\substack{j\in \Z\\ j\neq l-t_\mu}} c_{\Delta^{-b}}(j) \cdot e_{k,L^-}(l-j,\lambda). 
\end{align*}
The hypothesis $l\geq T-b$ implies that $l-t_\lambda\geq -b$, and therefore,  by Lemma \ref{lem:eisl}, the first quantity on the right hand side of the latter equation is positive. Since the second quantity is non-negative, we obtain the assertion.
\end{proof}

We say that a weakly holomorphic modular form  $f\in M_k^!(\rho_{L^-})$ with Fourier coefficients $c_f(l,\mu)$ has {\em non-negative principal part\/} 
%with respect to $L$} 
if
\[
%c_f(-Q(\lambda),\lambda)\geq 0
c_f(l,\mu)\geq 0
\]
for all $\mu\in L'/L$ and all $l<0$.
Note that the Borcherds lift $\Psi(z,f)$ of any $f\in M_{1-n/2}^!(\rho_{L^-})$ with integral and non-negative principal part is holomorphic on $X_\Gamma$. Theorem~\ref{prop:quot} is a direct consequence of the following proposition.

\begin{proposition}
Let $f\in M_{1-n/2}^!(\rho_{L^-})$. There exist $f_1,f_2\in M_{1-n/2}^!(\rho_{L^-})$
with non-negative principal part such that
$f=f_1-f_2$. If $f$ has integral principal part, we may also choose $f_1$ and $f_2$ with integral principal part.
\end{proposition}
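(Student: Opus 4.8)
The plan is to reduce a given $f\in M_{1-n/2}^!(\rho_{L^-})$ to two forms with non-negative principal part by \emph{adding} to it a suitable multiple of the auxiliary form $h$ constructed in Lemma \ref{lem:pospp}, whose principal part (and indeed all of whose low-order coefficients) is strictly positive. The idea is that $h$ is a ``positivity reservoir'': since $c_h(l,\mu)>0$ for every $\mu$ and every $l\geq T-b$, a large enough multiple $Nh$ will dominate the principal part of $f$ term by term.

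\smallskip
First I would fix the weight: we work entirely in $M_{1-n/2}^!(\rho_{L^-})$, so I take $b$ and $k=1-n/2+12b>2$ as in Lemma \ref{lem:pospp}, producing an $h$ with $c_h(l,\mu)>0$ for all $\mu\in L'/L$ and all $l\geq T-b$. Next, since $f$ has finitely many non-zero principal-part coefficients $c_f(l,\mu)$ with $l<0$, I would choose a positive integer (or positive rational) $N$ large enough that
\[
N\cdot c_h(l,\mu) + c_f(l,\mu)\geq 0
\]
for every $\mu$ and every $l<0$. This is possible because each relevant $c_h(l,\mu)$ is strictly positive (note $l<0\leq T-b$ is automatic once $b$ is taken $\geq T$, which I would arrange, or more carefully $l<0$ falls in the guaranteed-positive range after enlarging $b$). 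I would then set
\[
f_1 = f + N h,\qquad f_2 = N h,
\]
so that $f=f_1-f_2$, and both $f_1$ and $f_2$ lie in $M_{1-n/2}^!(\rho_{L^-})$. By construction $f_2=Nh$ has non-negative principal part (all its coefficients are non-negative by Lemma \ref{lem:pospp}), and $f_1$ has non-negative principal part by the choice of $N$.

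\smallskip
For the integrality addendum, if $f$ has integral principal part, I would invoke Theorem \ref{thm:intbasis} to replace $h$ by an integral-coefficient form, or simply clear denominators: choose $N$ to be a positive integer divisible by a common denominator of the finitely many coefficients $c_h(l,\mu)$ with $l<0$, so that $Nc_h(l,\mu)\in\Z$; then both $f_1$ and $f_2$ have integral principal part. The main obstacle to watch is ensuring the positivity of $h$ covers exactly the indices $(l,\mu)$ with $l<0$ that occur in the principal part of $f$; this is handled by taking $b$ large enough that $T-b$ is below the lowest pole order appearing in $f$, so that every principal-part index of $f$ lies in the range $l\geq T-b$ where $c_h(l,\mu)>0$. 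Apart from this bookkeeping the argument is elementary, since all the analytic work (the positivity of Eisenstein coefficients) has already been done in Lemmas \ref{lem:eisl} and \ref{lem:pospp}.
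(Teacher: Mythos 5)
Your proposal is correct and is essentially identical to the paper's proof: the paper also chooses $b$ with $b-T$ greater than the pole order of $f$, takes $h$ from Lemma \ref{lem:pospp}, and sets $f_1=f+c\cdot h$, $f_2=c\cdot h$ for a suitable positive integer $c$ (which, as you note, is taken to clear the denominators of the finitely many principal-part coefficients of $h$ in the integral case). Your only slip is the early parenthetical suggesting $b\geq T$ suffices, but you correct this yourself at the end with the right condition, namely that $T-b$ lies below the lowest pole order of $f$.
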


\begin{proof}
Let $b\in \Z_{>0}$ such that $b-T$ is greater than the order of the pole of $f$ at $\infty$. Let $h$ be the corresponding element of  $M_{1-n/2}^!(\rho_{L^-})$ as in Lemma \ref{lem:pospp}. Then there exists a positive integer $c$ such that
\[
f_1=f+c\cdot h
\]
has non-negative principal part. Setting in addition $f_2=c\cdot h$
gives the desired representation of $f$.
\end{proof}

We close this section with a variant of Theorem~\ref{prop:quot}, which can be proved similarly.

\begin{theorem}
\label{prop:quot2}
For every $\mu\in L'/L$ and every positive $m\in Q(\mu+L)$ there exists a (non-zero) holomorphic Borcherds product for $\Gamma$ which vanishes along $Z(m,\mu)$.
\end{theorem}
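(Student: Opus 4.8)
The plan is to realize the desired Borcherds product as the lift of a single weakly holomorphic form with everywhere non-negative Fourier coefficients---exactly the type of form produced by Lemma~\ref{lem:pospp}. Recall the observation made just before the previous proposition: the Borcherds lift $\Psi(z,f)$ of any $f\in M^!_{1-n/2}(\rho_{L^-})$ with integral non-negative principal part is holomorphic on $X_\Gamma$. On the other hand, by \cite[Theorem 13.3]{Bo1} the order of vanishing of $\Psi(z,f)$ along the special divisor $Z(m,\mu)$ is a positive multiple of the principal-part coefficient $c_f(-m,\mu)$. Thus it suffices to exhibit an $f\in M^!_{1-n/2}(\rho_{L^-})$ whose principal part is integral and non-negative and for which $c_f(-m,\mu)>0$.

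First I would fix the pair $(m,\mu)$ with $m\in Q(\mu+L)$ positive, so that $Z(m,\mu)$ is a non-trivial divisor, and then choose an integer $b>0$ large enough that both $k:=1-n/2+12b>2$ and $-m\geq T-b$ hold; the latter is just the condition $b\geq T+m$. Applying Lemma~\ref{lem:pospp} with this $b$ produces an $h\in M^!_{1-n/2}(\rho_{L^-})$ with non-negative rational Fourier coefficients $c_h(l,\nu)$ that are moreover strictly positive for all indices $l\geq T-b$. Since the index $-m$ satisfies $-m\geq T-b$ by the choice of $b$, we obtain $c_h(-m,\mu)>0$; and since \emph{every} coefficient of $h$ is non-negative, in particular the entire principal part of $h$ is non-negative.

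It then remains only to arrange integrality. As $h$ has a pole of finite order at $\infty$, its principal part together with its constant term involve only finitely many rational numbers, so I would multiply by a suitable positive integer $N$ and set $f=N\,h$. This keeps $f$ in $M^!_{1-n/2}(\rho_{L^-})$, renders its principal part (and constant term) integral, preserves the non-negativity of all coefficients, and scales the relevant coefficient to $c_f(-m,\mu)=N\,c_h(-m,\mu)>0$. Consequently $\Psi(z,f)$ is a non-zero holomorphic Borcherds product for $\Gamma$ whose divisor contains $Z(m,\mu)$ with positive multiplicity, which is the assertion.

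I do not anticipate a serious obstacle here, since the real content is already packaged in Lemma~\ref{lem:pospp}: its construction $h=\Delta^{-b}E_{k,L^-}$ simultaneously guarantees holomorphicity (all coefficients non-negative) and the prescribed vanishing (strict positivity of the single coefficient at the index $-m$, once $b$ is taken large enough for the positivity window $l\geq T-b$ to reach $-m$). The only points requiring care are the sign and indexing conventions relating the divisor $Z(m,\mu)$ to the principal-part coefficient $c_f(-m,\mu)$ of a form for $\rho_{L^-}$, and the elementary clearing of denominators needed to pass from rational to integral principal part.
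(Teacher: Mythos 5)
Your proof is correct and is exactly the argument the paper intends: the paper offers no explicit proof of Theorem \ref{prop:quot2} beyond saying it ``can be proved similarly'' to Theorem \ref{prop:quot}, and your use of Lemma \ref{lem:pospp} with $b\geq T+m$ (so that the positivity window $l\geq T-b$ reaches the index $-m$), followed by clearing denominators in the principal part, is precisely that similar argument. The only detail worth recording is the one you already note, namely that $-m\in\Z-Q(\mu)$ and $Z(m,\mu)$ is non-trivial because $m\in Q(\mu+L)$, so the strictly positive coefficient $c_f(-m,\mu)$ really does produce vanishing along a non-empty divisor.
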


In view of \cite[Theorem 6.3]{HM} and \cite[Remark 7.2]{HM} this result can be employed to prove the converse theorem  for Borcherds products for lattices that split two hyperbolic planes over $\Z$ (see \cite[Theorem 5.12]{Br1}) in a completely different way.
%\begin{corollary}
%\end{corollary}

\end{document}